\newtheorem{theorem}{Theorem}[section]
\newtheorem{lemma}[theorem]{Lemma}
\newtheorem{proposition}[theorem]{Proposition}
\newtheorem{corollary}[theorem]{Corollary}
\theoremstyle{definition}
\theoremstyle{remark}
\newtheorem{remark}[theorem]{Remark}
\numberwithin{equation}{section}
\newcommand{\abs}[1]{\left|#1\right|}
\newcommand{\Rm}{\textup{Rm}}
\newcommand{\Ric}{\textup{Ric}}
\newcommand{\Vol}{\textup{Vol}}
\newcommand{\diam}{\textup{diam}}
\newcommand{\FS}{\textup{FS}}
\newcommand{\Tr}{\textup{Tr}}
\newcommand{\eval}[2]{\left. #1 \right|_{#2}}
\newcommand{\R}{\mathbb{R}}
\newcommand{\C}{\mathbb{C}}
\newcommand{\Q}{\mathbb{Q}}
\newcommand{\CP}{\mathbb{CP}}
\newcommand{\dd}[1]{\frac{\partial}{\partial #1}}
\newcommand{\D}[2]{\frac{\partial #1}{\partial #2}}
\newcommand{\ddbar}{\partial \bar{\partial}}
\def\<{\langle}
\def\>{\rangle}
\def\({\left(}
\def\){\right)}
\def\p{\partial}
\def\Ric{{\rm Ric}}
\begin{document}
\title[The collapsing rate of KRF with infinite time singularity]
{The Collapsing Rate of the K\"ahler-Ricci Flow with Regular 
Infinite Time Singularity}
\author[Frederick T.H. Fong]{Frederick Tsz-Ho Fong$^*$}
\address{$^*$Department of Mathematics, Stanford University, 450 Serra Mall, Stanford CA 94305, USA}
\email{thfong@math.stanford.edu}
\thanks{$^*$Partially supported by US NSF Grant: DMS-\#1105323}

\author[Zhou Zhang]{Zhou Zhang$^\dag$}
\address{$^\dag$Carslaw Building, School of Mathematics and Statistics, Sydney University
NSW 2006, Australia}
\email{zhangou@maths.usyd.edu.au}
\thanks{$^\dag$Partially supported by Australian Research Council Discovery Project: DP110102654}

\subjclass[2010]{Primary 53C44; Secondary 35K96}
\keywords{Ricci flow, K\"ahler, semi-ample, Calabi-Yau fibration}
\date{\today}

\begin{abstract}
We study the collapsing behavior of the K\"ahler-Ricci flow on a compact 
K\"ahler manifold $X$ admitting a holomorphic submersion $X \xrightarrow
{\pi}\Sigma$ inherited from its canonical bundle, where $\Sigma$ is a K\"ahler 
manifold with $\dim_\C \Sigma<\dim_\C X$. We show that the flow metric 
degenerates at exactly the rate of $e^{-t}$ as predicted by the cohomology 
information, and so the fibres $\pi^{-1}(z)$, $z \in \Sigma$ collapse at the 
optimal rate $\textup{diam}_t (\pi^{-1}(z)) \simeq e^{-t/2}$. Consequently, it leads to some analytic and geometric extensions to the regular case of  Song-Tian's works \cite{ST07, ST08}. Its applicability to general Calabi-Yau fibrations will also be discussed in local settings.
\end{abstract}
\maketitle

\section{Introduction}
In this note, we let $X$ be a closed connected K\"ahler manifold with $\dim_\C X = n$ which admits the following fibration. Let $(\Sigma, \omega_\Sigma)$ be a K\"ahler manifold with $\dim_\C
\Sigma = n-r<n$ and $X \xrightarrow{\pi} \Sigma$ is a surjective holomorphic 
submersion. This submersion gives a smooth fibration structure by classical results due to Ehresmann \cite{Eh51} and Fischer-Grauert \cite{FG65}. For each $z \in \Sigma$, we call $\pi^{-1}(z)$ a fibre based at $z$, which is a complex submanifold of $X$ with $\dim_\C = r$. $X$ is a smooth fibre bundle over $\Sigma$, but the induced complex structure on each fibre may vary. In the case where the fibres are isomorphic, $X$ is a holomorphic fibre bundle over $\Sigma$. Here, we allow $\Sigma$ to be a point, i.e. $r=n$.

Throughout the note, we assume that the first Chern class $c_1(X)= -\pi^*\alpha$ for some K\"ahler class $\alpha$ on $\Sigma$, and so each fibre $\pi^{-1}(z)$ is a Calabi-Yau manifold. We consider the following normalized K\"ahler-Ricci flow on $X$, defined by
\begin{equation}
\label{KRF}
\frac{\p \omega_t}{\p t}=-\Ric\(\omega_t\)-\omega_t, \quad
\eval{\omega_t}{t=0}=\omega_0, 
\end{equation}
with any K\"ahler metric $\omega_0$ as the initial metric.

The K\"ahler class $[\omega_t]$ at time $t$ is precisely given by $-c_1(X)+e^{-t}([\omega_0] +c_1(X))$, where we have chosen 
the convention $c_1(X)=[\Ric(\omega)]$ for any K\"ahler metric 
$\omega$ on $X$. The maximal existence time $T$ of \eqref{KRF} 
is uniquely determined by the optimal existence result due to Tian and the second-named author 
in \cite{TZ06}, namely 
\[T= \sup\{t:-c_1(X)+e^{-t}([\omega_0] +c_1(X))~\text{is K\"ahler.}\}.\]

The infinite time singularity case (i.e. $T = \infty$) in this note is as follows. We have 
a surjective holomorphic submersion $\pi$ as described above.
Moreover, $\pi^*[\omega_\Sigma]=-m\cdot c_1(X)$ for 
some K\"ahler class $[\omega_\Sigma]$ over $\Sigma$ and a positive 
integer $m$. In practice, we usually have $\pi$ generated by holomorphic 
sections of the line bundle $m\cdot K_X$ as a map $X \xrightarrow{\pi} \CP^N$, where $K_X$ is the canonical bundle of $X$, i.e. $c_1(K_X)=-c_1(X)$, and $\Sigma$ is the image of $\pi$. One can take $\omega_\Sigma=\omega_{\FS}|_\Sigma$ where $\omega_{\FS}$ is the Fubini-Study metric on $\CP
^N$, and $[\omega_\Sigma]$ is the restriction of the hyperplane class 
of $\CP^N$ to $\Sigma$. Under this setting, $-c_1(X)$ is semi-ample and by the optimal existence 
result, the flow exists forever. The limiting K\"ahler class as $t \to \infty$ is exactly $-c_1(X)$. We call this {\bf regular infinite time 
singularity}.

Define $\omega_\infty=\pi^*\omega_\Sigma$ and set  
$$\hat\omega_t=\omega_\infty+e^{-t}(\omega_0-\omega_\infty).$$ 
Then $\hat\omega_t$ is a reference metric in the same K\"ahler class as the flow metric $\omega_t$.
The following is the main result of this paper:

\begin{theorem}
\label{main}
Let $X \xrightarrow{\pi} \Sigma$ be a holomorphic submersion described above and $\omega_t$ satisfies the normalized K\"ahler-Ricci flow $\partial_t \omega_t = -\Ric(\omega_t) - \omega_t$ on $X$. Assume we have regular infinite time singularity and the K\"ahler class $[\omega_t]$ limits to $\pi^*[\omega_\Sigma]$ for some K\"ahler metric $\omega_\Sigma$ on $\Sigma$ (i.e. $c_1(K_X) = \pi^*[\omega_\Sigma]$). Then, using the notations introduced above, we have $$C^{-1}\hat\omega_t\leq \omega_t\leq C\hat
\omega_t$$ where $C$ is a uniform constant depending only on $n, r, 
\omega_0$, and $\omega_\Sigma$. Hence, $\omega_t\simeq e^{-t}
\omega_0$ along fibres and the fibres have diameters uniformly 
bounded from above and below by exponentially decaying terms, i.e.
$$C^{-1}e^{-\frac{t}{2}} \leq \diam_t (\pi^{-1}(z)) \leq C e^{-\frac{t}{2}}, \quad \text{ for any } z \in \Sigma.$$
\end{theorem}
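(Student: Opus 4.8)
The plan is to establish the two-sided metric equivalence $C^{-1}\hat\omega_t \leqslant \omega_t \leqslant C\hat\omega_t$ via a parabolic Schwarz-lemma / maximum-principle argument applied to suitable trace quantities, following the general strategy of Song-Tian but exploiting the submersion hypothesis to get the \emph{sharp} exponential rate rather than a weaker estimate. The first step is to rewrite the flow in potential form: since $[\omega_t] = [\hat\omega_t]$, write $\omega_t = \hat\omega_t + \sqrt{-1}\,\partial\bar\partial \varphi_t$, and reduce \eqref{KRF} to a parabolic complex Monge–Ampère equation of the form $\partial_t \varphi_t = \log\frac{(\hat\omega_t + \sqrt{-1}\partial\bar\partial\varphi_t)^n}{\Omega} - \varphi_t$ for an appropriate volume form $\Omega$ (determined by $\Ric(\Omega) = -\omega_\infty = -\pi^*\omega_\Sigma$, which exists precisely because $c_1(K_X) = \pi^*[\omega_\Sigma]$ by Yau's theorem on each fiber / a global semipositive representative). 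The crucial cohomological input is the volume computation: along the fibers $\hat\omega_t^n$ has a factor of $e^{-rt}$ (the fiber is $r$-dimensional and its class scales like $e^{-t}$), so the expected behavior $\omega_t^n \simeq \hat\omega_t^n$ is already cohomologically consistent, and the game is to upgrade this to a pointwise statement.

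The core analytic steps: (1) a $C^0$ estimate showing $\varphi_t$ is uniformly bounded — this should follow from Song-Tian-type estimates since $-c_1(X)$ is semi-ample and the relevant classes are Kähler in the limit; I would run a maximum principle on $\varphi_t$ and on $\partial_t\varphi_t$ (the latter using the evolution $\partial_t(\partial_t\varphi_t) = \Delta_{\omega_t}(\partial_t\varphi_t) - \partial_t\varphi_t + \text{(lower order)}$) to control the volume ratio $\frac{\omega_t^n}{\hat\omega_t^n}$ from above and below by uniform constants. (2) The upper bound $\omega_t \leqslant C\hat\omega_t$: apply the parabolic Schwarz lemma / Aubin–Yau second-order estimate to $\tr_{\omega_t}\hat\omega_t$, computing $(\partial_t - \Delta_{\omega_t})$ of $\log \tr_{\omega_t}\hat\omega_t$ (or a perturbation $\log\tr_{\omega_t}\hat\omega_t - A\varphi_t$). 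Here the submersion structure is essential: because $\pi$ is a \emph{submersion} (no singular fibers, bisectional curvature of $\hat\omega_t$ controlled uniformly in $t$ after rescaling the fiber directions), the error terms from the curvature of $\hat\omega_t$ stay bounded — this is exactly where the "regular" hypothesis buys us the clean $e^{-t}$ rate that fails in the singular-fiber case. Combined with the volume bound from (1), the upper bound on $\tr_{\omega_t}\hat\omega_t$ and on $\tr_{\omega_t}\omega_0$ gives, via $\omega_t \leqslant \frac{(\tr_{\omega_t}\hat\omega_t)^{n-1}}{(n-1)!}\cdot\frac{\omega_t^n}{\hat\omega_t^n}\,\hat\omega_t$, the estimate $\omega_t \leqslant C\hat\omega_t$; the reverse bound $\omega_t \geqslant C^{-1}\hat\omega_t$ follows by the same token applied to $\tr_{\hat\omega_t}\omega_t$ together with the two-sided volume ratio control.

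**The hard part will be** the second-order estimate in step (2): obtaining a uniform (time-independent) bound on $\tr_{\omega_t}\hat\omega_t$ despite the fact that $\hat\omega_t$ itself is degenerating (its fiber directions shrink like $e^{-t}$). The naive Schwarz-lemma computation produces terms involving the bisectional curvature of $\hat\omega_t$ in mixed base/fiber directions, which a priori could blow up; the resolution is to work with the rescaled fiber metric $e^{t}\omega_0|_{\text{fiber}} + \omega_\infty$ and check that in these adapted coordinates the curvature of the reference metric, and hence all the error terms, are bounded uniformly in $t$ — a computation that uses the compactness of $\Sigma$, the smoothness of the fibration, and Ehresmann's theorem to trivialize locally. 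Once the metric equivalence is in hand, the diameter statement is immediate: restricting $C^{-1}\hat\omega_t \leqslant \omega_t \leqslant C\hat\omega_t$ to a fiber $\pi^{-1}(z)$ gives $C^{-1}e^{-t}\,\omega_0|_{\pi^{-1}(z)} \leqslant \omega_t|_{\pi^{-1}(z)} \leqslant C e^{-t}\,\omega_0|_{\pi^{-1}(z)}$ (since $\omega_\infty = \pi^*\omega_\Sigma$ pulls back to zero on each fiber), and taking diameters — which scale like the square root of the metric — yields $C^{-1}e^{-t/2}\diam_{\omega_0}(\pi^{-1}(z)) \leqslant \diam_t(\pi^{-1}(z)) \leqslant C e^{-t/2}\diam_{\omega_0}(\pi^{-1}(z))$, and absorbing the $z$-dependent but compact (hence bounded) quantity $\diam_{\omega_0}(\pi^{-1}(z))$ into $C$ finishes the proof.
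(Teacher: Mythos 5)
Your overall architecture (potential reformulation, $C^0$ bounds on $\varphi_t$ and $\partial_t\varphi_t$, the pointwise volume estimate $\omega_t^n\simeq e^{-rt}\Omega$, the base-direction Schwarz bound $\Tr_{\omega_t}\pi^*\omega_\Sigma\leqslant C$, and the final linear-algebra step deducing the upper bound from the lower bound plus volume comparison, then diameters) matches the paper. But there is a genuine gap exactly at the step you flag as ``the hard part,'' and your proposed resolution does not work. You want to bound $\Tr_{\omega_t}\hat\omega_t$ by a parabolic Schwarz lemma with reference metric $\hat\omega_t$, claiming that after rescaling fiber directions the bisectional curvature of $\hat\omega_t$, and hence the error terms, stay uniformly bounded. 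That claim is false in general: along the fibers $\hat\omega_t$ is $e^{-t}\omega_0|_{\pi^{-1}(z)}$, whose bisectional curvature grows like $e^{t}$ unless the fiberwise metric is flat (true for torus fibers, which is why a rescaling/semi-flat argument appears only in the toric Section 5, and even there it needs the semi-flat form of Gross--Tosatti--Y.~Zhang; it fails for, say, K3 fibers). Equivalently, if you instead trace against the fixed metric and study $\log(e^{-t}\Tr_{\omega_t}\omega_0)$ so that the curvature constant is that of $\omega_0$, the Schwarz computation produces a bad term of size $C\,\Tr_{\omega_t}\omega_0$, and the only barrier you offer, $-A\varphi_t$, yields a good term $A\,\Tr_{\omega_t}\hat\omega_t$, which is too weak by a factor $e^{t}$ in the fiber directions to absorb it. So as written the maximum principle does not close.

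The paper's missing ingredient is a sharper $0$th-order input: let $\Phi_t$ be the fiberwise average of $\varphi_t$; then $|e^{t}(\varphi_t-\Phi_t)|\leqslant C$ uniformly (Lemma~2.4). This is proved by restricting to each fiber, writing $e^{t}\omega_{t,z}=\omega_{0,z}+\sqrt{-1}\ddbar\,e^{t}(\varphi_t-\Phi_t)$, showing the fiberwise volume ratio is uniformly bounded (using the volume estimate together with $\Tr_{\omega_t}\pi^*\omega_\Sigma\leqslant C$), and applying Yau's $L^\infty$/Moser iteration estimate on the fibers with Sobolev--Poincar\'e constants uniform in $z$ (here the regularity of the submersion and compactness of $\Sigma$ enter). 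With this in hand one runs the maximum principle on $Q=\log(e^{-t}\Tr_{\omega_t}\omega_0)-Ae^{t}(\varphi_t-\Phi_t)$: the barrier is still uniformly bounded, yet its heat-operator evolution produces $Ae^{t}\Tr_{\omega_t}\hat\omega_t\geqslant A\,\Tr_{\omega_t}\omega_0$ (plus controllable terms involving $\Delta\Phi_t$ and the fiber average of $\partial_t\varphi_t$), which absorbs the bad term for $A$ large. This yields $\omega_t\geqslant C^{-1}e^{-t}\omega_0$, and together with $\Tr_{\omega_t}\omega_\infty\leqslant C$ and the volume bound, the two-sided equivalence with $\hat\omega_t$ follows as you indicate. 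Without an analogue of the fiberwise-normalized estimate on $e^{t}(\varphi_t-\Phi_t)$ (or some other device producing an $e^{t}$-amplified good term from a bounded barrier), your second-order step does not go through.
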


This result shares the same theme with several related works in the current literature. In \cite{ST07, ST08}, Song and Tian studied the collapsing behavior of elliptic and Calabi-Yau fibrations with non-big semi-ample canonical bundle under the normalized K\"ahler-Ricci flow \eqref{KRF}, and showed that the metric on the regular part converges, as a current, to a generalized K\"ahler-Einstein metric on the base manifold (see also \cite{KT08}). In case of elliptic fibrations, it was proved in \cite{ST07} that the convergence is in $C^{1,\alpha}$-sense for any $\alpha < 1$ on the potential level. Theorem \ref{main} in this note asserts if the fibration is regular then one can obtain an optimal fibre-collapsing rate $\diam_t \simeq e^{-t/2}$, and more importantly, shows that the $C^{1,\alpha}$-convergence also holds for smooth Calabi-Yau fibrations of general dimensions (see Corollary \ref{cor:C_one_alpha}).

There are analogous collapsing results for the unnormalized K\"ahler-Ricci flow $\partial_t \omega_t = -\Ric(\omega_t)$ with finite time singularity. For instance, the collapsing behavior of $\CP^r$-bundles was studied by Song, Sz\'ekelyhidi and Weinkove in \cite{SW09} and \cite{SSW11} (see also \cite{F1} by the first-named author). The collapsing behavior of Ricci-flat metrics on Calabi-Yau manifolds is also studied in \cite{To10} and \cite{GTZ11} by Gross, Tosatti and Y. Zhang. The common theme shared by all the aforesaid works is that the limiting behavior of the K\"ahler metric can be read off by the cohomological data.

Inspired by \cite{GTZ11}, we deduce several geometric and analytic consequences of Theorem \ref{main} on toric fibrations, a special case of Calabi-Yau fibrations with complex tori as fibres. The existence of semi-flat forms on toric fibrations with a good rescaling property allows us to make use of Theorem \ref{main} to further strengthen the $C^{1,\alpha}$-convergence. Using a parabolic analogue of Gross-Tosatti-Y.Zhang's argument, we show that on toric fibrations if the initial K\"ahler class is rational, then along the K\"ahler-Ricci flow we have (see Propositions \ref{prop:type_III}, \ref{prop:C_infty} and \ref{prop:fiber}):
\begin{enumerate}[(i)]
\item the Riemann curvature $\|\Rm\|_{\omega_t}$ is uniformly bounded;
\item $\omega_t$ converges smoothly to a generalized K\"ahler-Einstein metric on $\Sigma$; and
\item when restricted to each torus fibre, $e^t \omega_t$ converges smoothly to a flat metric on the fibre.
\end{enumerate}
Some of the above statements, particularly (ii), were conjectured in \cite{ST07, ST08} (see also \cite{SW_KRF}) on regular Calabi-Yau fibrations, and on general Calabi-Yau fibrations away from singular fibres. A recent preprint \cite{Gil12} by Gill gives an affirmative answer to the case where $X$ is a Cartesian product of a complex torus and a compact K\"ahler manifold with negative first Chern class. Our results hence further affirm these conjectures on a wider class of regular toric fibrations. One fundamental assumption in Propositions \ref{prop:type_III}, \ref{prop:C_infty} and \ref{prop:fiber} is that the initial K\"ahler class is rational. It guarantees the existence of a suitable semi-flat form explicitly constructed by Gross-Tosatti-Y.Zhang in \cite{GTZ11}. We hope that this technical assumption can be removed.

\subsection*{Acknowledgements}
The first-named author would like to thank his advisor Richard Schoen for his constant encouragement and support throughout the years in Stanford University. He would also like to thank Yanir Rubinstein, Jian Song and Ben Weinkove for many valuable discussions and inspiring ideas.

The second-named author would like to thank Gang Tian for introducing him into this interesting research area and constant support. He also like to thank the School of Mathematics and Statistics at Sydney University for providing the great research environment.

Both authors would like to thank Valentino Tosatti especially for suggesting the use of arguments developed in \cite{GTZ11} which contributes to a great part of Section 5, and also for his insightful comments on Section 6 in our previous draft. They also thank the referee for the careful check and suggestions. 

\section{Some Estimates on Decay Rates}

In this section, we prove the necessary estimates for establishing Theorem 
\ref{main}. We adopted the techniques developed in \cite{TZ06, ST07, Z09, To10} etc. Once the pointwise decay of the volume form $\omega_t^n$ is established, the rest of the argument will follows similarly as in \cite{F2} by the first-named author (see also \cite{To10} for an elliptic analogue of the argument).

We rewrite the K\"ahler-Ricci flow \eqref{KRF} as a parabolic complex 
Monge-Amp\`ere equation in the same way as in \cite{TZ06, ST07} etc. We use the family of reference metrics $\hat
{\omega}_t$ defined before, which is in the same K\"ahler class 
as $\omega_t$. By the $\ddbar$-lemma, there exists a family of 
smooth functions $\varphi_t$ such that $\omega_t = \hat\omega_t
+\sqrt{-1}\ddbar\varphi_t$. Let $\Omega$ be a volume form on $X$ 
such that
\begin{equation}
\label{Omega}
\sqrt{-1}\ddbar\log\Omega =\omega_\infty=\pi^*\omega_\Sigma,
\end{equation}
whose existence is clear from the cohomology consideration. 

Then it is easy to check that the K\"ahler-Ricci flow \eqref{KRF} is 
equivalent to the following scalar evolution equation (with a complex 
Monge-Amp\`ere looking): 
\begin{equation}
\label{MAP}
\D{\varphi_t}{t} = \log \frac{(\hat\omega_t + \sqrt{-1}\ddbar\varphi_t)
^n}{e^{-rt} \Omega}-\varphi_t, ~~~~\varphi_0=0,
\end{equation}
and so the solution $\varphi_t$ also exists forever.

\vspace{0.1in}

\noindent{\bf Convention:} in this note, we denote $C > 0$ to be a uniform 
constant which depends only on $n, r, \omega_0, \omega_\Sigma$, 
and may change from line to line. $\Delta$ stands for Laplacian with 
respect to the flow metric $\omega_t$. 

\vspace{0.1in}

We begin with the following $0$th-order estimates. 

\begin{lemma}
\label{ddt_phi:bbd1}
For \eqref{MAP}, there exists a uniform constant $C = C(n, r, \omega_0, 
\omega_\Sigma)$ such that
$$|\varphi_t|\leq C, ~~~~~~~~~~~~\vline\frac{\p \varphi_t}{\p t}
\vline \leq C$$
\end{lemma}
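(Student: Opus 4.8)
The plan is to read \eqref{MAP} as a parabolic complex Monge--Amp\`ere equation and run a chain of maximum--principle estimates; the key preliminary input is a uniform comparison of the reference volume forms. Since $\pi$ is a holomorphic submersion and $\omega_\Sigma$ is K\"ahler, $\omega_\infty=\pi^*\omega_\Sigma$ is a smooth semipositive $(1,1)$-form of constant rank $n-r$, so $\omega_\infty^{n-k}=0$ whenever $k<r$. Expanding $\hat\omega_t^n=\big((1-e^{-t})\omega_\infty+e^{-t}\omega_0\big)^n$ binomially, every surviving term carries a factor $e^{-rt}$ and has coefficient a wedge of fixed forms, while the $k=r$ term produces the strictly positive volume form $\binom{n}{r}(1-e^{-t})^{n-r}e^{-rt}\,\omega_\infty^{n-r}\wedge\omega_0^r$; together with $e^{-t}\omega_0\leqslant\hat\omega_t\leqslant\omega_0+\omega_\infty$ this gives $C^{-1}e^{-rt}\Omega\leqslant\hat\omega_t^n\leqslant Ce^{-rt}\Omega$ on the compact $X$, hence $\bigl|\log\bigl(\hat\omega_t^n/(e^{-rt}\Omega)\bigr)\bigr|\leqslant C$ for all $t\geqslant0$.

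With this, $|\varphi_t|\leqslant C$ is routine: applying the maximum principle to $t\mapsto\max_X\varphi_t$, at a spatial maximum $\sqrt{-1}\ddbar\varphi_t\leqslant0$, so $\omega_t^n\leqslant\hat\omega_t^n$ there and \eqref{MAP} gives $\partial_t\varphi_t\leqslant\log\bigl(\hat\omega_t^n/(e^{-rt}\Omega)\bigr)-\varphi_t\leqslant C-\varphi_t$; the elementary ODE comparison bounds $\max_X\varphi_t$ from above, and the lower bound is symmetric using $\sqrt{-1}\ddbar\varphi_t\geqslant0$ at a spatial minimum.

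For the upper bound on $\dot\varphi_t:=\partial_t\varphi_t$, differentiate \eqref{MAP} in $t$. Using $\partial_t\hat\omega_t=\omega_\infty-\hat\omega_t$ one obtains $(\partial_t-\Delta)\dot\varphi_t=\tr_{\omega_t}\omega_\infty-\tr_{\omega_t}\hat\omega_t+r-\dot\varphi_t$ and $(\partial_t-\Delta)\varphi_t=\dot\varphi_t-n+\tr_{\omega_t}\hat\omega_t$. Since $\hat\omega_t=(1-e^{-t})\omega_\infty+e^{-t}\omega_0$ with $\omega_0>0$, we have $\tr_{\omega_t}\hat\omega_t\geqslant(1-e^{-t})\tr_{\omega_t}\omega_\infty\geqslant0$, so for $Q:=\dot\varphi_t-\varphi_t$ one gets $(\partial_t-\Delta)Q=\tr_{\omega_t}\omega_\infty-2\tr_{\omega_t}\hat\omega_t+(n+r)-2\dot\varphi_t$, and for $t\geqslant1$ the coefficient of the uncontrolled term $\tr_{\omega_t}\omega_\infty$ becomes nonpositive, leaving $(\partial_t-\Delta)Q\leqslant C-2Q$ after absorbing $\varphi_t$ by $|\varphi_t|\leqslant C$. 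The maximum principle gives $\max_X Q\leqslant C$ for $t\geqslant1$; short-time parabolic estimates (with constants depending only on $\omega_0$ and $\Omega$) cover $t\in[0,1]$; hence $\dot\varphi_t=Q+\varphi_t\leqslant C$.

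The main obstacle is the lower bound $\dot\varphi_t\geqslant-C$, which is equivalent to the assertion that the volume form does not collapse faster than the cohomologically predicted rate, i.e.\ $\omega_t^n\geqslant C^{-1}e^{-rt}\Omega$. The naive maximum principle at a spatial minimum of $\dot\varphi_t+\varphi_t=\log\bigl(\omega_t^n/(e^{-rt}\Omega)\bigr)$, whose evolution is $(\partial_t-\Delta)(\dot\varphi_t+\varphi_t)=\tr_{\omega_t}\omega_\infty-(n-r)$, only yields $\dot\varphi_t+\varphi_t\geqslant-C-(n-r)t$, because the favourable term $\tr_{\omega_t}\omega_\infty$ is merely nonnegative rather than close to $n-r$. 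To eliminate this linear loss I would first prove the metric upper bound $\omega_t\leqslant C\hat\omega_t$ by a parabolic Schwarz/Aubin--Yau computation --- which now requires only the bound $\dot\varphi_t\leqslant C$ established above --- forcing $\tr_{\omega_t}\omega_\infty\geqslant C^{-1}\tr_{\hat\omega_t}\omega_\infty\geqslant C^{-1}\tr_{\omega_0}\omega_\infty\geqslant c>0$ pointwise; and then, exploiting that the cohomological identity $\int_X\omega_t^n=[\omega_t]^n$ forces the $\omega_t^n$-average of $(n-r)-\tr_{\omega_t}\omega_\infty$ to be $O(e^{-t})$, upgrade the linear bound to a uniform one by a maximum-principle/iteration argument in the spirit of \cite{TZ06, ST07, Z09, To10}. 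This volume lower bound is the technical heart of the matter; once $|\dot\varphi_t|\leqslant C$ is in hand, the remaining assertions of Theorem \ref{main} follow as in \cite{F2}.
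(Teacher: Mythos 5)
Your bounds for $|\varphi_t|$ and for the upper bound of $\partial_t\varphi_t$ are correct: the binomial-expansion proof of $C^{-1}e^{-rt}\Omega\leqslant\hat\omega_t^n\leqslant Ce^{-rt}\Omega$ is exactly the input the paper uses implicitly, and your maximum principle applied to $\dot\varphi_t-\varphi_t$ for $t\geqslant 1$ (with a compactness argument on $[0,1]$) is a legitimate variant of the paper's cleaner choice $(e^t-1)\dot\varphi_t-\varphi_t$, which vanishes at $t=0$ and so avoids the short-time step altogether.

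The genuine gap is the lower bound $\dot\varphi_t\geqslant -C$, precisely the part you defer. Your proposed route is circular: the inequality $\omega_t\leqslant C\hat\omega_t$ that you want to extract from a ``parabolic Schwarz/Aubin--Yau computation using only $\dot\varphi_t\leqslant C$'' is essentially the conclusion of Theorem \ref{main} itself, and it is not accessible this way --- a second-order estimate against the reference family $\hat\omega_t$ carries constants depending on the curvature of $\hat\omega_t$, which blows up like $e^{t}$ in the fiber directions; even the much weaker bound \eqref{schwarz}, $\Tr_{\omega_t}\omega_\infty\leqslant C$, is obtained in the paper only \emph{after} both bounds of Lemma \ref{ddt_phi:bbd1} are in hand (via the Schwarz-lemma quantity coupled to $\dot\varphi_t+\varphi_t$, which needs two-sided control). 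Moreover, even granting $\Tr_{\omega_t}\omega_\infty\geqslant c>0$ pointwise, this does not remove the linear loss in your ``naive'' computation unless $c$ is close to $n-r$, and the step ``the $\omega_t^n$-average of $(n-r)-\Tr_{\omega_t}\omega_\infty$ is $O(e^{-t})$, hence upgrade by iteration'' is a hope, not an argument: an average statement gives no pointwise minimum-principle conclusion. The idea you are missing is much simpler and is the one the paper takes from \cite{ST07}: feed the Monge--Amp\`ere equation back into the trace. Since $\omega_t^n=e^{\dot\varphi_t+\varphi_t-rt}\Omega$ and (as you proved) $\hat\omega_t^n\geqslant C^{-1}e^{-rt}\Omega$, the arithmetic--geometric mean inequality gives
\begin{equation*}
\Tr_{\omega_t}\hat\omega_t\;\geqslant\; n\left(\frac{\hat\omega_t^n}{\omega_t^n}\right)^{1/n}\;\geqslant\; C^{-1}e^{-(\dot\varphi_t+\varphi_t)/n}\;\geqslant\; C^{-1}e^{-\dot\varphi_t/n},
\end{equation*}
using $|\varphi_t|\leqslant C$. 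Adding \eqref{eq:box_dphi+phi} to twice the evolution of $\varphi_t$ (whose right-hand side contains $+\Tr_{\omega_t}\hat\omega_t$) yields
\begin{equation*}
\left(\partial_t-\Delta\right)\left(\dot\varphi_t+2\varphi_t\right)\;\geqslant\;\dot\varphi_t-C+C^{-1}e^{-\dot\varphi_t/n},
\end{equation*}
and at a spatial minimum of $\dot\varphi_t+2\varphi_t$ the exponential term dominates the linear one when $\dot\varphi_t$ is very negative, so the maximum principle bounds $\dot\varphi_t+2\varphi_t$, hence $\dot\varphi_t$, from below --- no Schwarz lemma, no metric upper bound, and no cohomological integral identity are needed.
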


\begin{proof}

Because $\pi: X\to \Sigma$ is a fibre bundle structure and that $\hat\omega_t^n \simeq e^{-rt}\Omega$, by a straightforward Maximum Principle 
argument, we have $|\varphi_t|\leq C$.

Next we derive the bound for $\frac{\partial \varphi_t}{\partial t}$. Taking $t$-derivative of \eqref{MAP} we get 
$$\frac{\partial}{\partial t}\(\frac{\partial \varphi_t}{\partial t}\)=\Delta
\(\frac{\partial \varphi_t}{\partial t}\)-e^{-t}\Tr_{\omega_t}(\omega_0-
\omega_\infty)-\frac{\partial \varphi_t}{\partial t}+r.$$

We can also reformulate it to the following two equations: 
$$\frac{\partial}{\partial t}\(e^t\frac{\partial \varphi_t}{\partial t}\)
=\Delta\(e^t\frac{\partial \varphi_t}{\partial t}\)-\Tr_{\omega_t}
(\omega_0-\omega_\infty)+re^t,$$
\begin{equation}\label{eq:box_dphi+phi}
\frac{\partial}{\partial t}\(\frac{\partial \varphi}{\partial t}+\varphi
_t\)=\Delta\(\frac{\partial \varphi_t}{\partial t}+\varphi_t\)-n+r+\Tr_
{\omega_t}\omega_\infty.
\end{equation}

The difference of these two is 
$$\frac{\partial}{\partial t}\left((e^t-1)\frac{\partial \varphi_t}{\partial 
t}-\varphi_t\right)=\Delta\left((e^{t}-1)\frac{\partial \varphi_t}{\partial 
t}-\varphi_t\right)-\Tr_{\omega_t}\omega_0+re^t+n-r.$$

Applying Maximum Principle and the bounds for $\varphi_t$, we have
$$\frac{\partial \varphi_t}{\partial t}\leq\frac{(n-r)t+re^t+C}
{e^t-1}\leq C.$$

For the lower bound, we can mimic the argument in \cite{ST07} as follows. 
$$n^{-n}\Tr_{\omega_t}\hat\omega_t \geq\frac{\hat\omega^n_t}
{\omega^n_t}=\frac{\hat\omega^n_t}{e^{\frac{\partial \varphi_t}
{\partial t}+\varphi_t-rt}\Omega}\geq Ce^{-\frac{\partial \varphi_t}
{\partial t}}.$$

We can then combine   
$$\left(\frac{\partial}{\partial t}-\Delta\right)\(\frac{\partial \varphi_t}
{\partial t}+\varphi_t\)=-n+r+\Tr_{\omega_t}\omega_\infty\geq 
-n+r$$
$$\left(\frac{\partial}{\partial t}-\Delta\right)\varphi_t=\frac{\partial 
\varphi_t}{\partial t}-n+\Tr_{\omega_t}\hat\omega_t\geq\frac
{\partial \varphi_t}{\partial t}-n+Ce^{-\frac{\partial \varphi_t}
{\partial t}}.$$
to arrive at  
$$\left(\frac{\partial}{\partial t}-\Delta\right)\(\frac{\partial \varphi_t}
{\partial t}+2\varphi_t\) \geq\frac{\partial \varphi_t}{\partial t}-C
+Ce^{-\frac{\partial \varphi_t}{\partial t}}.$$
Again applying Maximum Principle and the bounds of $\varphi_t$, we 
can conclude the lower bound for $\frac{\partial \varphi_t}{\partial t}$.
\end{proof}
\begin{remark}
For the unnormalized K\"ahler-Ricci flow with finite time singularity, the first-named author has to assume in \cite{F2} a uniform bound on $\Tr_{\omega_0}\Ric(\omega_t)$ in order to derive an appropriate pointwise decay of the volume form $\omega_t^n$. Note that such an assumption is not needed in the setting of this note.

In \cite{ST11}, there is a delicate argument to establish the same results as in Lemma \ref{ddt_phi:bbd1} when the  $\pi$ is not assumed to be regular.
\end{remark}

These $0$th-order bounds provide the exact setting as in \cite{Z10} and \cite{ST11}, and lead to a sequence of estimates which eventually prove the uniform bound of the scalar curvature. Among those estimates, there is one which is useful for our purpose of this note:
\begin{equation}
\label{schwarz}
\Tr_{\omega_t}\pi^*\omega_\Sigma=\Tr_{\omega_t}\omega_
\infty\leq C,    
\end{equation}
uniformly for $t\in [0, \infty)$.

Lemma \ref{ddt_phi:bbd1} tells us that the volume form of 
$\omega_t$ behaves exactly as predicted by the cohomology 
information. Since it is useful for establishing the main theorem, we summarize it 
in the following lemma:
\begin{lemma}
\label{ddt_phi:bbd2}
There exists a uniform constant $C = C(n, r, \omega_0, \omega_
\Sigma) > 0$ such that for any $t \in [0, \infty)$, we have
\begin{equation}
\label{volume_form_bbd}
C^{-1}e^{-rt}\Omega\leq \omega^n_t\leq Ce^{-rt}
\Omega.
\end{equation}
\end{lemma}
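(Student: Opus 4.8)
The plan is to read off Lemma \ref{ddt_phi:bbd2} directly from the time-derivative bound of Lemma \ref{ddt_phi:bbd1}. First I would isolate the volume ratio in the parabolic Monge--Amp\`ere equation \eqref{MAP}. Since $\omega_t = \hat\omega_t + \sqrt{-1}\ddbar\varphi_t$, equation \eqref{MAP} is precisely
$$\frac{\partial \varphi_t}{\partial t} + \varphi_t = \log\frac{\omega_t^n}{e^{-rt}\Omega},$$
so that the logarithm of the normalized volume form equals $\partial_t\varphi_t + \varphi_t$.

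Next I would invoke Lemma \ref{ddt_phi:bbd1}, which furnishes a uniform constant $C = C(n,r,\omega_0,\omega_\Sigma)$ with $|\varphi_t| \leqslant C$ and $|\partial_t \varphi_t| \leqslant C$ for all $t \in [0,\infty)$. Combining these with the identity above gives
$$\left| \log\frac{\omega_t^n}{e^{-rt}\Omega} \right| \leqslant 2C$$
uniformly in $t$, and exponentiating yields \eqref{volume_form_bbd} after relabeling the constant. The dependence of the new constant on only $n, r, \omega_0, \omega_\Sigma$ is inherited from the corresponding dependence in Lemma \ref{ddt_phi:bbd1}.

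In this sense there is no real obstacle: Lemma \ref{ddt_phi:bbd2} is a bookkeeping corollary of the $0$th-order estimates, and all the genuine work --- in particular the two-sided bound on $\partial_t \varphi_t$ via the Maximum Principle and the Schwarz-type inequality $\Tr_{\omega_t}\hat\omega_t \geqslant C (\hat\omega_t^n/\omega_t^n)$ --- has already been carried out in the proof of Lemma \ref{ddt_phi:bbd1}. One could alternatively differentiate $\log\omega_t^n$ along \eqref{KRF} directly, but this would reintroduce the scalar curvature and is less economical than the route above.
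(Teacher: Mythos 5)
Your proposal is correct and is exactly the paper's (implicit) argument: the paper states Lemma \ref{ddt_phi:bbd2} as an immediate consequence of Lemma \ref{ddt_phi:bbd1}, since \eqref{MAP} gives $\log\bigl(\omega_t^n/(e^{-rt}\Omega)\bigr) = \partial_t\varphi_t + \varphi_t$, which is uniformly bounded. Spelling out this identity and exponentiating, as you do, is precisely the intended proof.
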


We now show the K\"ahler potential $\varphi_t$ decays at 
a rate of $e^{-t}$ after a suitable normalization described 
below.

For each $z \in \Sigma$ and $t \in [0,T)$, we denote $\omega_
{t,z}$ to be the restriction of $\omega_t$ on the fibre $\pi^
{-1}(z)$. For each $t \in [0,T)$, we define a function $\Phi_t: 
\Sigma \to \R$ by 
\[\Phi_t (z) = \frac{1}{\Vol_{\omega_{0,z}}(\pi^{-1}(z))}
\int_{\pi^{-1}(z)} \varphi_t ~ \omega_{0,z}^r\]
which is the average value of $\varphi_t$ over each fibre 
$\pi^{-1}(z)$. The pull-back $\pi^*\Phi_t$ is then a function defined on $X$. 
For simplicity, we also denote $\pi^*\Phi_t$ by $\Phi_t$.

\begin{lemma}\label{potential_bbd}
There exists a uniform constant $C = C(n, r, \omega_0, 
\omega_\Sigma)$ such that for any $t \in [0, \infty)$, we 
have
\begin{equation}
\abs{e^t(\varphi_t - \Phi_t)} \leq C. 
\end{equation}
\end{lemma}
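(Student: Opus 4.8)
The plan is to estimate the fiberwise oscillation of $\varphi_t$, since $\varphi_t - \Phi_t$ has zero average on every fiber $\pi^{-1}(z)$ with respect to $\omega_{0,z}^r$. The key observation is that, restricted to a fiber, $\omega_t|_{\pi^{-1}(z)} = \omega_{t,z}$ is a Kähler metric whose class is $e^{-t}[\omega_{0,z}]$ (because $\omega_\infty = \pi^*\omega_\Sigma$ restricts trivially to fibers, and $\hat\omega_t = \omega_\infty + e^{-t}(\omega_0 - \omega_\infty)$ restricts to $e^{-t}\omega_{0,z}$ on $\pi^{-1}(z)$). Hence on each fiber we have the identity $e^{-t}\omega_{0,z} + \sqrt{-1}\,\partial_z\bar\partial_z \varphi_t = \omega_{t,z} > 0$, i.e. $\psi_t := e^t \varphi_t|_{\pi^{-1}(z)}$ is an $\omega_{0,z}$-plurisubharmonic function on the fiber. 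This reduces the problem to a uniform Moser-type $L^\infty$ bound on a normalized Kähler potential on a family of Calabi-Yau fibers.

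The main steps I would carry out are the following. \emph{Step 1:} Rescale. Set $\psi_t = e^t\varphi_t$ and note $\sqrt{-1}\,\partial_z\bar\partial_z\psi_t \geqslant -\omega_{0,z}$ on each fiber, so $\psi_t$ is quasi-psh with a fixed background form. \emph{Step 2:} Control the volume form on fibers. From Lemma \ref{ddt_phi:bbd2} we have $C^{-1}e^{-rt}\Omega \leqslant \omega_t^n \leqslant Ce^{-rt}\Omega$ globally on $X$. Decomposing the volume form along the fibration (using that $\pi$ is a submersion and $\Omega$ splits, up to bounded factors, into a fiber volume times a base volume), this gives $C^{-1}\omega_{0,z}^r \leqslant e^{rt}\omega_{t,z}^r \leqslant C\omega_{0,z}^r$, i.e. the rescaled fiber metric $e^t\omega_{t,z}$ has volume form uniformly comparable to $\omega_{0,z}^r$ on every fiber, uniformly in $t$ and $z$. \emph{Step 3:} Apply a uniform $L^\infty$ estimate. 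The rescaled potential $\psi_t$ satisfies a complex Monge-Ampère inequality $(\omega_{0,z} + \sqrt{-1}\,\partial_z\bar\partial_z\psi_t)^r \leqslant C\,\omega_{0,z}^r$ on $\pi^{-1}(z)$ with zero average; by Ko\l odziej's or Yau's $L^\infty$ estimate (or a direct Moser iteration using the Sobolev inequality on the fibers), $\mathrm{osc}_{\pi^{-1}(z)}\psi_t \leqslant C$, where $C$ is uniform because the fibers form a compact smooth family with uniformly controlled geometry relative to $\omega_0$ and $\omega_\Sigma$. Since $\psi_t - \pi^*(e^t\Phi_t)$ has zero fiberwise average, $|\psi_t - e^t\Phi_t| \leqslant \mathrm{osc}_{\pi^{-1}(z)}\psi_t \leqslant C$, which is exactly $|e^t(\varphi_t - \Phi_t)| \leqslant C$.

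The hard part will be Step 2 together with the uniformity in Step 3: one must ensure that the constants in the fiberwise volume comparison and in the Sobolev/$L^\infty$ estimate do not degenerate as $z$ ranges over $\Sigma$ and as $t \to \infty$. The fibration being a genuine smooth fiber bundle (Ehresmann–Fischer-Grauert, as noted in the introduction) over the compact base $\Sigma$ is what makes this work: we may cover $\Sigma$ by finitely many trivializing charts, so that all fibers are diffeomorphic and their $\omega_0$-geometry varies continuously, hence with uniform bounds. One subtlety is that the complex structure on the fibers varies, so the relevant $\bar\partial$-operators $\partial_z\bar\partial_z$ change with $z$; however, the Sobolev constant and the constant in Ko\l odziej's estimate depend only on the Riemannian data $(\pi^{-1}(z), \omega_{0,z})$, which is uniformly controlled. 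I would also need to check that the average $\Phi_t$ interacts correctly with the flow equation — but for this lemma only the fiberwise oscillation bound is needed, and the evolution of $\Phi_t$ itself can be deferred to the subsequent analysis.
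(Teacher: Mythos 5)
Your overall strategy is the same as the paper's: restrict to a fiber, use that $\hat\omega_t$ restricts to $e^{-t}\omega_{0,z}$ and that $\Phi_t$ is fiberwise constant, rescale the potential by $e^t$ to get a fiberwise Monge--Amp\`ere equation with bounded density, and invoke Yau's $L^\infty$/Moser iteration with Sobolev--Poincar\'e constants made uniform by the compactness of $\Sigma$ and the absence of singular fibers. That part is fine, as is the final reduction $|e^t(\varphi_t-\Phi_t)|\leqslant \mathrm{osc}_{\pi^{-1}(z)}(e^t\varphi_t)$.

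The genuine gap is in your Step 2. The global volume bound of Lemma \ref{ddt_phi:bbd2}, $C^{-1}e^{-rt}\Omega\leqslant\omega_t^n\leqslant Ce^{-rt}\Omega$, does \emph{not} by itself control the restricted volume form $\omega_{t,z}^r$: the total volume form cannot see how $\omega_t$ distributes between fiber and base directions. For instance, on a product $E\times\Sigma$ a metric of the form $a(t)\,\omega_E+b(t)\,\omega_\Sigma$ with $a(t)^r b(t)^{n-r}\simeq e^{-rt}$ satisfies the volume bound, yet one may take $a\equiv 1$ and $b\simeq e^{-rt/(n-r)}$, in which case $\omega_{t,z}^r$ does not decay at all. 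So the claimed comparison $e^{rt}\omega_{t,z}^r\leqslant C\,\omega_{0,z}^r$ does not follow from "$\Omega$ splits into fiber times base volume"; you need an additional estimate ruling out the scenario where the base directions of $\omega_t$ shrink. That missing ingredient is the parabolic Schwarz-lemma bound \eqref{schwarz}, $\Tr_{\omega_t}\pi^*\omega_\Sigma\leqslant C$, which the paper uses exactly here via the pointwise identity
\begin{equation*}
\frac{\omega_{t,z}^r}{\omega_{0,z}^r}
=\frac{\omega_t^r\wedge(\pi^*\omega_\Sigma)^{n-r}}{\omega_t^n}\cdot
\frac{\omega_t^n}{\omega_0^r\wedge(\pi^*\omega_\Sigma)^{n-r}}
\leqslant C\left(\Tr_{\omega_t}\pi^*\omega_\Sigma\right)^{n-r}e^{-rt},
\end{equation*}
combining \eqref{schwarz} with Lemma \ref{ddt_phi:bbd2}. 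With this inserted, your argument goes through. Two smaller remarks: the two-sided fiberwise comparison you assert (the lower bound $C^{-1}\omega_{0,z}^r\leqslant e^{rt}\omega_{t,z}^r$) is neither available at this stage nor needed — an upper bound on the density suffices, since the rescaled fiber class is fixed, so $\int_{\pi^{-1}(z)}(e^t\omega_{t,z})^r=\int_{\pi^{-1}(z)}\omega_{0,z}^r$ pins the total mass; and the zero-average normalization you use is precisely the paper's $\tilde\varphi_t=e^t(\varphi_t-\Phi_t)$, so no separate analysis of the evolution of $\Phi_t$ is required for this lemma.
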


\begin{proof}

Denote $\tilde\varphi_t=e^t(\varphi_t - \Phi_t)$. For each 
$z \in \Sigma$, we have $\hat\omega_{t,z} = e^{-t}
\omega_{0,z}$, and so 
\[\omega_{t,z}=e^{-t}\omega_{0,z} + \eval{\sqrt{-1}
\ddbar\varphi_t}{\pi^{-1}(z)}.\]

Since $\Phi_t$ depends only on $z \in \Sigma$, we have $
\eval{\sqrt{-1}\ddbar\Phi_t}{\pi^{-1}(z)} = 0$. By 
rearranging, we have
\begin{equation}
\label{metric_restricted}
e^t\omega_{t,z}=\omega_{0,z} + \eval{\sqrt{-1}\ddbar
\tilde\varphi_t}{\pi^{-1}(z)}.
\end{equation}

Regard \eqref{metric_restricted} to be a metric equation 
on the manifold $\pi^{-1}(z)$, and we have
\begin{equation}
\label{volume_r}
\left(\omega_{0,z} + \eval{\sqrt{-1}\ddbar\tilde\varphi_t}
{\pi^{-1}(z)}\right)^r=\left(e^t\omega_{t,z}\right)^r
\end{equation}

Using Lemma \ref{ddt_phi:bbd2}, we can see along $\pi^
{-1}(z)$,
\begin{align}\label{volume_r_bbd}
\frac{\omega_{t,z}^r}{\omega_{0,z}^r} 
&= \frac{\omega_t^r \wedge (\pi^*\omega_\Sigma)^{n-r}}
{\omega_0^r \wedge (\pi^*\omega_\Sigma)^{n-r}} \\
\nonumber 
&= \frac{\omega_t^r \wedge (\pi^*\omega_\Sigma)^{n-r}}
{\omega_t^n} \cdot \frac{\omega_t^n}{\omega_0^r \wedge 
(\pi^*\omega_\Sigma)^{n-r}}\\
\nonumber 
& \leq C(\Tr_{\omega_t}\pi^*\omega_\Sigma)^{n-r} \cdot e^{-rt}.
\end{align}

Combining \eqref{schwarz} with \eqref{volume_r_bbd}, we 
see that \eqref{volume_r} can be restated as
\begin{equation}
\left(\omega_{0,z} + \eval{\sqrt{-1}\ddbar\tilde\varphi_t}
{\pi^{-1}(z)}\right)^r = F_z(\xi,t) \left(\omega_{0,z}\right)^r
\end{equation}
where $F_z(\xi,t) : \pi^{-1}(z) \times [0,T) \to \R_{>0}$ is 
uniformly bounded from above.

Since $\int_{\pi^{-1}(z)} \tilde\varphi_t \omega_{0,z}^r = 0$, 
by applying Yau's $L^\infty$-estimate (see \cite{Y78}) on \eqref
{metric_restricted}, we then have
\begin{equation}
\sup_{\pi^{-1}(z) \times [0,T)}|\tilde\varphi_t| \leq C_z,
\end{equation}
where $C_z$ depends on $n, r, \omega_0, \omega_\Sigma, 
\sup_{\pi^{-1}(z) \times [0,T)}F_z, \Vol_{\omega_{0,z}}
(\pi^{-1}(z))$, the Sobolev and Poincar\'e constants of 
$\pi^{-1}(z)$ with respect to metric $\omega_{0,z}$, all 
of which can be bounded uniformly independent of $z$. 
It completes the proof of the lemma.
\end{proof}

\begin{remark}
Yau's $L^{\infty}$-estimate was proved by a Moser's iteration argument. Readers may refer to Chapter 2 of \cite{Siu87} for an exposition of the proof.
\end{remark}
\begin{remark}
In our setting, the uniform boundedness of Sobolev and Poincar\'e 
constants of $(\pi^{-1}(z), \omega_{0,z})$ follows from the 
compactness of $\Sigma$ and the absence of singular fibres. 
With the presence of singular fibres, there is a detail discussion in \cite{To10} in this regard. The bounds of these constants can be derived using the fact that $\pi^{-1}(z)$'s are minimal submanifolds of $X$ and the classical results in \cite{MS73, Ch75, LY80}.
\end{remark}

\section{Proof of Theorem \ref{main}}
Now we can proceed to the proof of the main result about the collapsing rate.
\begin{proof}
[Proof of Theorem \ref{main}]
We apply Maximum Principle to the following quantity
\[Q := \log(e^{-t}\Tr_{\omega_t}\omega_0)-Ae^t(\varphi_t - 
\Phi_t),\]
where $A$ is a positive constant to be chosen. Denote 
$\square = \partial_t - \Delta$, and we have
\begin{align}
\label{metric_lower_bd:ineq1}
\square\log(e^{-t}\Tr_{\omega_t}\omega_0) 
& \leq C+C\Tr_{\omega_t}\omega_0
\end{align}
where $C$ depends on the curvature of $\omega_0$.

We also need to compute the evolution equation for the second term in $Q$. 
\begin{align*}
\square Ae^t(\varphi_t - \Phi_t) 
&= Ae^t\left(\D{\varphi_t}{t} - \D{\Phi_t}{t}
\right) +Ae^t(\varphi_t - \Phi_t)\\
& \quad -Ae^t(\Delta\varphi_t - \Delta \Phi_t)\\
&\geq Ae^t\left(\frac{\p \varphi_t}{\p t} - \int_
{\pi^{-1}(z)} \D{\varphi_t}{t} \omega_{0,z}^r 
\right) - CA\\
&\quad - Ae^t(n-\Tr_{\omega_t}\hat\omega_t - 
\Delta\Phi_t).
\end{align*}

Using the lower bound of $\frac{\p \varphi_t}{\p t}$ given by Lemma \ref{ddt_phi:bbd1}, 
we have
\begin{align}\label{metric_lower_bd:ineq2}
\square Ae^t(\varphi_t - \Phi_t) & \geq -CAe^t+
Ae^t\Tr_{\omega_t}\hat\omega_t\\
\nonumber & \quad + Ae^t \left(\Delta\Phi_t-\int_
{\pi^{-1}(z)} \D{\varphi_t}{t} \omega_{0,z}^r\right).
\end{align}

Combining \eqref{metric_lower_bd:ineq1} and \eqref
{metric_lower_bd:ineq2}, we have
\begin{align}
\label{metric_lower_bd:ineq3}
\square Q & \leq CAe^t + C\Tr_{\omega_t}\omega_0 - 
Ae^t\Tr_{\omega_t}\left(e^{-t}\omega_0+(1-e^{-t})
\omega_\infty\right)\\
\nonumber 
& \quad -Ae^t \left(\Delta\Phi_t-\int_{\pi^{-1}(z)} \D
{\varphi_t}{t} \omega_{0,z}^r\right)\\
\nonumber 
& \leq CAe^t + (C - A)\Tr_{\omega_t}\omega_0\\
\nonumber 
& \quad - Ae^t \left(\Delta\Phi_t-\int_{\pi^{-1}(z)} \D
{\varphi_t}{t} \omega_{0,z}^r\right).
\end{align}

By Lemma \ref{ddt_phi:bbd1}, we have $\D{\varphi_t}
{t} \leq C$ for some uniform constant $C$. It follows that
\[\int_{\pi^{-1}(z)} \D{\varphi_t}{t} \omega_{0,z}^r 
\leq C.\]

Note that $\Vol_{\omega_{0,z}} (\pi^{-1}(z))$ is actually 
independent of $z$.

For the Laplacian term of $\Phi_t$, we have
\begin{align*}
\Delta \int_{\pi^{-1}(z)}\varphi_t \omega_{0,z}^r 
&= \Tr_{\omega_t}\int_{\pi^{-1}(z)} \sqrt{-1}\ddbar\varphi_t 
\wedge \omega_{0,z}^r\\
&= \Tr_{\omega_t}\int_{\pi^{-1}(z)} (\omega_t-\hat\omega_t) 
\wedge \omega_{0,z}^r\\
&\geq -\Tr_{\omega_t} \int_{\pi^{-1}(z)} \hat\omega_t 
\wedge \omega_{0,z}^r\\
&\geq -\Tr_{\omega_t}\int_{\pi^{-1}(z)}\left(\omega_0
\wedge \omega_{0,z}^r + \pi^*\omega_\Sigma \wedge 
\omega_{0,z}^r\right).
\end{align*}

Since $\Tr_{\omega_t}\pi^*\omega_\Sigma \leq C$ and 
$\int_{\pi^{-1}(z)}\left(\omega_0 \wedge \omega_{0,z}^r + 
\pi^*\omega_\Sigma \wedge \omega_{0,z}^r\right)$ is a 
smooth $(1,1)$-form on $\Sigma$ independent of $t$, we have 
\[\Delta\int_{\pi^{-1}(z)}\varphi_t \omega_{0,z}^r \geq 
-C\]
for some uniform constant $C$. Back to \eqref{metric_lower_bd:ineq3}, we have
\begin{equation}
\square Q \leq CAe^t+(C-A)\Tr_{\omega_t}\omega_0 
\leq CAe^t - \Tr_{\omega_t}\omega_0
\end{equation}
if we choose $A$ sufficiently large such that $C-A\leq -1$.

Hence, for any $S> 0$, at the point where $Q$ achieves its 
maximum over $X \times [0, S]$, we have $\Tr_{\omega_t}
(e^{-t}\omega_0)\leq C$ for some uniform constant 
$C$ independent of $S$. Together with Lemma \ref{potential_bbd}, 
it follows that for any $t \in [0, \infty)$ we have,
\begin{equation}
\label{metric_lower_bbd:semifinial}
C^{-1}e^{-t}\omega_0 \leq \omega_t.
\end{equation}
Combining with the fact from \eqref{schwarz} that $\omega_t \geq C^{-1}\pi^*
\omega_\Sigma$, we have
\begin{equation}
\label{metric_lower_bbd:final}
C^{-1}\hat\omega_t \leq \omega_t.
\end{equation}
Together with  Lemma \ref{ddt_phi:bbd2} which indicates $\omega^
n_t\leq C\hat\omega^n_t$, we also have $\omega_t \leq 
C\hat\omega_t$ for any $t \in [0, \infty)$.

It completes the proof of the theorem. 
\end{proof}

\section{Convergence at Time Infinity}

The argument in \cite{ST07} can be directly applied to our regular 
infinite time singularity case for general dimension and show that 
the K\"ahler-Ricci flow converges to the commonly called generalized 
K\"ahler-Einstein metric. This is done in more general setting in 
\cite{ST08}, and we include it here for completeness.

We focus on the non-trivial case $\dim_\mathbb{C}\Sigma
\geq 1$. The fibres of the map $\pi: X\to \Sigma$ are all 
smooth Calabi-Yau manifolds, and so there is a Ricci-flat metric 
$\omega_{0, z}+\sqrt{-1}\p\bar\p \Psi(z)$ for each $z\in\Sigma$. 
After normalizing $\Psi(z)$ to have $\int_{\pi^{-1}(z)}\Psi(z)
\omega^r_{0, z}=0$, we have a smooth function $\Psi$ over 
$X$ with the smooth closed $(1, 1)$-form 
$$\omega_{SF}=\omega_0+\sqrt{-1}\p\bar\p\Psi$$
being Ricci flat on each fibre. We further define the 
following smooth function \textit{a priori} on $X$,  
$$F=\frac{\Omega}{(^n_r)\omega^{n-r}_\infty\wedge 
\omega^r_{SF}}$$
which makes sense despite of the fact that $\omega_{SF}$ might 
not be a metric over $X$. 

Since $\sqrt{-1}\p\bar\p \log\Omega=\omega_\infty=\pi^*\omega_
\Sigma$ and $\omega_{SF}$ is a Ricci-flat metric along each fibre, 
we know that $F$ is constant along each fibre and so is the 
pull-back of a smooth function over $\Sigma$. 

Over $\Sigma$, we always have a unique and smooth solution $u$ to the following 
complex Monge-Amp\`ere equation, which is a classic elliptic equation 
when $\dim_\mathbb{C}\Sigma=1$, 
$$(\omega_\Sigma+\sqrt{-1}\p\bar\p u)^{n-r}=Fe^u\omega^
{n-r}_\Sigma,$$
and we use the same notation for its pull-back on $X$.

Denote $\omega_{GKE}=\omega_\Sigma+\sqrt{-1}\p\bar\p u$. Direct 
computation as in \cite{ST07} then shows that this metric satisfies 
$$\Ric(\omega_{GKE})=-\omega_{GKE}+\omega_{WP}$$
where $\omega_{WP}$ is the Weil-Petersson metric determined by 
the fibration $\pi: X\to\Sigma$. We also use $\omega_{GKE}$ 
for its pull-back on $X$, and so on $X$, $\omega_{GKE}=
\omega_\infty+\sqrt{-1}\p\bar\p u$.

Our main result in this section is the following. 
\begin{theorem}\label{thm:convergence}
\label{potential-con}
The solution $\varphi_t$ for (\ref{MAP}) converges uniformly to 
$u$ as $t\to\infty$.
\end{theorem}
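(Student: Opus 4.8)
The plan is to view \eqref{MAP} as a parabolic complex Monge-Amp\`ere equation whose expected limit is the elliptic equation defining $u$, and to run a maximum-principle argument on the difference $\varphi_t - u$ together with its time derivative. The key point is that the main theorem already supplies the two-sided metric bound $C^{-1}\hat\omega_t \le \omega_t \le C\hat\omega_t$, so the flow does not degenerate faster or slower than predicted; in particular $\Tr_{\omega_t}\pi^*\omega_\Sigma \le C$ from \eqref{schwarz} and the volume-form bound \eqref{volume_form_bbd} are available throughout. First I would rewrite \eqref{MAP} using the reference metric $\hat\omega_t$ and the fiberwise Ricci-flat form $\omega_{SF}$, splitting the volume form $\Omega$ via the definition of $F$ so that $\D{\varphi_t}{t} = \log\frac{(\hat\omega_t + \sqrt{-1}\ddbar\varphi_t)^n}{\binom{n}{r} e^{-rt}\, \omega_\infty^{n-r} \wedge \omega_{SF}^r \cdot F} - \varphi_t$. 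Since $\hat\omega_t^n = \binom{n}{r} e^{-rt}\, \omega_\infty^{n-r}\wedge\omega_0^r + O(e^{-(r+1)t})$ and $\omega_{SF}$ differs from $\omega_0$ by $\sqrt{-1}\ddbar\Psi$ along fibers, the right-hand side becomes, up to exponentially small corrections and the addition of an $\sqrt{-1}\ddbar$ of a bounded function, exactly the Monge-Amp\`ere operator whose stationary solution is $u$.

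Next I would set $\psi_t := \varphi_t - \Phi_t - (u - \bar u)$ after matching normalizations, but more directly I would work with $\eta_t := \varphi_t - u$ (pulled back from $\Sigma$) and control it in two stages: (a) show $\D{\varphi_t}{t} \to 0$, and (b) feed this into the equation to conclude $\varphi_t \to u$. For (a), Lemma \ref{ddt_phi:bbd1} already gives $|\D{\varphi_t}{t}| \le C$; to upgrade to decay, I would apply the maximum principle to $e^{\e t}\D{\varphi_t}{t}$ or to $\D{\varphi_t}{t} \pm$ (a suitable barrier built from $e^{-t}$-terms and $\Tr_{\omega_t}\hat\omega_t$), using the evolution equation $\square \D{\varphi_t}{t} = -e^{-t}\Tr_{\omega_t}(\omega_0 - \omega_\infty) - \D{\varphi_t}{t} + r$ derived in the proof of Lemma \ref{ddt_phi:bbd1}. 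Combined with the metric equivalence (so $\Tr_{\omega_t}\omega_0$ and $\Tr_{\omega_t}\omega_\infty$ are comparable to their $\hat\omega_t$-counterparts), this forces $|\D{\varphi_t}{t}| \le C e^{-\delta t}$ for some $\delta > 0$, hence $\varphi_t$ converges uniformly to some limit $\varphi_\infty$. For (b), passing to the limit in \eqref{MAP} along a subsequence (using the metric bounds to get higher-order estimates on $\Sigma$, or Yau's $L^\infty$-estimate fiberwise as in Lemma \ref{potential_bbd} to see that $\varphi_\infty$ is a pullback from $\Sigma$), the limiting equation reads $0 = \log\frac{(\omega_\infty + \sqrt{-1}\ddbar\varphi_\infty)^{n}/\cdots}{\cdots} - \varphi_\infty$, which after the $\omega_{SF}$/$F$ bookkeeping is precisely $(\omega_\Sigma + \sqrt{-1}\ddbar\varphi_\infty)^{n-r} = F e^{\varphi_\infty}\omega_\Sigma^{n-r}$; by uniqueness of the solution of that Monge-Amp\`ere equation, $\varphi_\infty = u$.

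The main obstacle is step (a): obtaining a genuine \emph{rate} (or even just vanishing) of $\D{\varphi_t}{t}$ rather than mere boundedness. The difficulty is that the forcing term $-e^{-t}\Tr_{\omega_t}(\omega_0 - \omega_\infty)$ is only $O(e^{-t}) \cdot O(\text{bounded})$ once we know $\Tr_{\omega_t}\omega_0 \le C e^t$ — which is exactly what Theorem \ref{main} gives via \eqref{metric_lower_bbd:semifinial} in the reverse direction $\Tr_{\omega_t}\omega_0 \le C\Tr_{\omega_t}\hat\omega_t \cdot e^t \le Ce^t$ — but controlling the sign and size of this term uniformly near the maximum of the test quantity requires care, since $\omega_0 - \omega_\infty$ is not signed. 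I expect the clean route is to apply the maximum principle to $\D{\varphi_t}{t} + \varphi_t - u$, whose box-operator is $-n + r + \Tr_{\omega_t}\omega_\infty - \Delta u = \Tr_{\omega_t}(\omega_\infty + \sqrt{-1}\ddbar u) - n + r = \Tr_{\omega_t}\omega_{GKE} - n + r$, and then exploit that $\omega_{GKE}$ is a fixed K\"ahler metric on $\Sigma$ comparable to $\omega_\infty$, combined with the arithmetic-geometric mean inequality and the volume bound \eqref{volume_form_bbd} to close the estimate. Once $\D{\varphi_t}{t} \to 0$, the rest follows by the uniqueness argument sketched above; the other steps are routine given the a priori estimates already established in the excerpt.
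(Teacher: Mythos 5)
Your plan hinges on first proving that $\D{\varphi_t}{t}$ tends to $0$ (indeed with a rate), and that is precisely the step you leave open, with good reason: it does not close with the estimates available in the paper. The device you suggest --- a maximum principle for $\D{\varphi_t}{t}+\varphi_t-u$, whose heat operator is $\Tr_{\omega_t}\pi^*\omega_{GKE}-(n-r)$ --- requires a \emph{sharp} lower bound on $\Tr_{\omega_t}\pi^*\omega_{GKE}$, essentially $\geqslant (n-r)\,e^{(u-\varphi_t-\dot\varphi_t)/(n-r)}$ with a constant tending to $1$, in order to force decay of the quantity rather than mere boundedness. What \eqref{schwarz}, \eqref{volume_form_bbd} and Theorem \ref{main} give are two-sided bounds by uniform constants $C^{\pm 1}$; feeding them through the arithmetic-geometric mean inequality only yields $\Tr_{\omega_t}\pi^*\omega_{GKE}\geqslant c>0$, which reproduces Lemma \ref{ddt_phi:bbd1} and nothing more. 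Obtaining the sharp constant amounts to knowing that $e^{t}\omega_t$ restricted to the fibers converges to the semi-flat (fiberwise Ricci-flat) metric, a statement at least as strong as the theorem itself and one the paper only establishes later (Section 5) under extra hypotheses (toric fibers, rational class). The same missing fiberwise asymptotics undercut your step (b): from $\dot\varphi_t\to 0$ and $\varphi_t\to\varphi_\infty$ one gets $e^{rt}\omega_t^n\to e^{\varphi_\infty}\Omega$, but converting this into the base equation $(\omega_\Sigma+\sqrt{-1}\ddbar\varphi_\infty)^{n-r}=Fe^{\varphi_\infty}\omega_\Sigma^{n-r}$ requires either convergence of the rescaled fiber metrics or a pushforward-of-currents argument; the uniform equivalence $\omega_t\simeq\hat\omega_t$ gives only $C^0$ control of $\sqrt{-1}\ddbar\varphi_t$, not its convergence, so the ``$\omega_{SF}$/$F$ bookkeeping'' hides a genuine argument that is not supplied.

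The paper's proof avoids both difficulties and is much more direct: set $v_t=\varphi_t-u-e^{-t}\Psi$, where $\Psi$ is the normalized fiberwise potential making $\omega_{SF}=\omega_0+\sqrt{-1}\ddbar\Psi$ Ricci-flat on fibers, and use the identity $\binom{n}{r}\omega_{GKE}^{n-r}\wedge\omega_{SF}^{r}=e^{u}\Omega$ to rewrite \eqref{MAP} as $\D{v_t}{t}=\log\bigl(e^{rt}\bigl((\omega_{GKE}-e^{-t}\omega_\infty)+e^{-t}\omega_{SF}+\sqrt{-1}\ddbar v_t\bigr)^n\big/\binom{n}{r}\omega_{GKE}^{n-r}\wedge\omega_{SF}^{r}\bigr)-v_t$. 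The key observation is that the logarithmic term with $v_t$ deleted is $O(e^{-t})$, so the ODE maximum principle applied to $A(t)=\max_X v_t$ (and the minimum) gives $|v_t|\leqslant C(1+t)e^{-t}$, hence $|\varphi_t-u|\leqslant Ce^{-t/2}$: exponential convergence, obtained without any decay of $\dot\varphi_t$, without Theorem \ref{main}, and without identifying a limit equation a posteriori. Your opening decomposition ($\hat\omega_t^n\simeq\binom{n}{r}e^{-rt}\omega_\infty^{n-r}\wedge\omega_0^{r}$, replacing $\omega_0$ by $\omega_{SF}$) is in the right spirit; the idea you are missing is to absorb $e^{-t}\Psi$ into the unknown and compare directly against the explicit family $(\omega_{GKE}-e^{-t}\omega_\infty)+e^{-t}\omega_{SF}$, rather than trying first to kill $\D{\varphi_t}{t}$ and then pass to a limit.
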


\begin{proof}

Set $v_t=\varphi_t-u-e^{-t}\Psi$. Since the flow metric $\omega_t=
\hat\omega_t+\sqrt{-1}\p\bar\p \varphi_t$ and $\hat\omega_t=
\omega_\infty+e^{-t}(\omega_0-\omega_\infty)$, we have 
$$\omega_t=(\omega_{GKE}-e^{-t}\omega_\infty)+e^{-t}
\omega_{SF}+\sqrt{-1}\p\bar\p v_t.$$

Meanwhile, since $\omega^{n-r}_{GKE}=Fe^u\omega^{n-r}_
\infty$ and $F=\frac{\Omega}{(^n_r)\omega^{n-r}_\infty\wedge 
\omega^r_{SF}}$, we have 
$$(^n_r)\omega^{n-r}_{GKE}\wedge\omega^r_{SF}=\Omega 
e^u.$$

Combine this to compute the evolution of $v$ as follows
\begin{equation}
\begin{split}
\frac{\p v_t}{\p t}
&= \frac{\p \varphi_t}{\p t}+e^{-t}\Psi \\
&= \log\frac{e^{rt}\((\omega_{GKE}-e^{-t}\omega_\infty)+e^{-t}
\omega_{SF}+\sqrt{-1}\p\bar\p v_t\)^n}{\Omega}-\varphi+e^{-t}\Psi \\
&= \log \frac{e^{rt}\((\omega_{GKE}-e^{-t}\omega_\infty)+e^{-t}
\omega_{SF}+\sqrt{-1}\p\bar\p v_t\)^n}{(^n_r)\omega^{n-r}_{GKE}
\wedge\omega^r_{SF}}+u-\varphi+e^{-t}\Psi \\
&=  \log \frac{e^{rt}\((\omega_{GKE}-e^{-t}\omega_\infty)+e^{-t}
\omega_{SF}+\sqrt{-1}\p\bar\p v_t\)^n}{(^n_r)\omega^{n-r}_{GKE}
\wedge\omega^r_{SF}}-v_t.
\end{split}
\end{equation}

Now we apply the standard Maximum Principle argument for $v_t$ by 
looking at the spatial extremal value as a function of $t$. 

The following observation is very useful 
$$-Ce^{-t}\leq \log \frac{e^{rt}\((\omega_{GKE}-e^{-t}\omega_
\infty)+e^{-t}\omega_{SF}\)^n}{(^n_r)\omega^{n-r}_{GKE}\wedge
\omega^r_{SF}}\leq Ce^{-t}.$$

Set $A(t)=\max_{X} v_t$ and we have 
$$\frac{d A}{d t}\leq Ce^{-t}-A$$ 
and so $v_t\leq Cte^{-t}+Ce^{-t}$. Similarly $v_t\geq -Cte^{-t}
-Ce^{-t}$.

Hence we conclude $|\varphi_t-u|\leq Ce^{-\frac{t}{2}}$, and $\varphi_t \to u$ exponentially.

\end{proof}

Recall that Theorem \ref{main} proves $\omega_t$ and $\hat{\omega}_t$ are uniformly equivalent. Combining with the fact that $\Tr_{\omega_0}\hat{\omega
}_t \leq C$, one can show $|\Delta_{\omega_0}\varphi_t| \leq C$ and hence we have
\begin{corollary}\label{cor:C_one_alpha}
\label{metric-con}
K\"ahler-Ricci flow $\omega_t$ converges to $\omega_{GKE}$ as $t\to
\infty$ in the sense that the metric potential $\varphi_t\to u$ in $C^
{1, \alpha}$-norm for any $\alpha<1$.
\end{corollary}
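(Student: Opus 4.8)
The plan is to upgrade the uniform convergence $\varphi_t \to u$ from Theorem \ref{thm:convergence} to $C^{1,\alpha}$-convergence on the potential level, for any $\alpha < 1$, by extracting a uniform $C^{1,1}$-type bound (more precisely, a uniform bound on $|\Delta_{\omega_0}\varphi_t|$) and then invoking elliptic-type interpolation. First I would record that $\omega_t = \hat\omega_t + \sqrt{-1}\ddbar\varphi_t$, so that
\begin{equation}
\label{eq:laplace_phi}
\Delta_{\omega_0}\varphi_t = \Tr_{\omega_0}\omega_t - \Tr_{\omega_0}\hat\omega_t.
\end{equation}
The second term on the right is controlled because $\hat\omega_t = \omega_\infty + e^{-t}(\omega_0-\omega_\infty)$ is a convex combination (for $t \geq 0$) of the fixed metrics $\omega_\infty$ and $\omega_0$, hence $0 < \Tr_{\omega_0}\hat\omega_t \leq C$ uniformly. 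For the first term, Theorem \ref{main} gives $\omega_t \leq C\hat\omega_t$, and since $\hat\omega_t \leq \omega_0 + \omega_\infty \leq C\omega_0$ we get $\omega_t \leq C\omega_0$, i.e. $\Tr_{\omega_0}\omega_t \leq C$. Plugging both into \eqref{eq:laplace_phi} yields the two-sided bound $|\Delta_{\omega_0}\varphi_t| \leq C$ uniformly in $t \in [0,\infty)$.

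Next I would combine this Laplacian bound with the $C^0$-bound $|\varphi_t| \leq C$ from Lemma \ref{ddt_phi:bbd1}. By standard elliptic $L^p$ estimates for the fixed operator $\Delta_{\omega_0}$ on the compact manifold $X$, a uniform bound on $\|\Delta_{\omega_0}\varphi_t\|_{L^\infty}$ together with $\|\varphi_t\|_{L^\infty} \leq C$ gives a uniform bound on $\|\varphi_t\|_{W^{2,p}(X,\omega_0)}$ for every finite $p$. The Sobolev embedding $W^{2,p} \hookrightarrow C^{1,\alpha}$, valid for $p > 2n = \dim_\R X$ with $\alpha = 1 - 2n/p$, then furnishes a uniform bound $\|\varphi_t\|_{C^{1,\alpha}(X,\omega_0)} \leq C_\alpha$ for every $\alpha < 1$. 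Since this family of bounds is uniform in $t$, the set $\{\varphi_t : t \geq 0\}$ is precompact in $C^{1,\alpha'}(X)$ for any $\alpha' < \alpha < 1$ by Arzel\`a--Ascoli.

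Finally I would upgrade precompactness to convergence. Theorem \ref{thm:convergence} already shows $\varphi_t \to u$ in $C^0(X)$ (indeed exponentially, $\|\varphi_t - u\|_{C^0} \leq Ce^{-t/2}$). If $\varphi_{t_j} \to \psi$ in $C^{1,\alpha'}$ along some sequence $t_j \to \infty$, then in particular $\varphi_{t_j} \to \psi$ in $C^0$, forcing $\psi = u$; since every subsequential $C^{1,\alpha'}$-limit equals $u$ and the family is precompact, the full family converges: $\varphi_t \to u$ in $C^{1,\alpha}$ for any $\alpha < 1$. Because $\omega_{GKE} = \omega_\infty + \sqrt{-1}\ddbar u$ and $\hat\omega_t \to \omega_\infty$ smoothly, this is exactly the asserted $C^{1,\alpha}$-convergence of the potentials (equivalently, of the metrics in the appropriate potential sense). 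I do not expect any serious obstacle here: the only mildly delicate point is being careful that the elliptic constants are taken with respect to the \emph{fixed} background metric $\omega_0$ (not the degenerating $\omega_t$), so that they are genuinely $t$-independent; once \eqref{eq:laplace_phi} is set up this way, the rest is routine elliptic theory plus the compactness argument.
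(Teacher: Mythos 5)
Your proposal is correct and follows essentially the same route as the paper: the paper's (very brief) justification is precisely that Theorem \ref{main} together with $\Tr_{\omega_0}\hat{\omega}_t \leq C$ gives $|\Delta_{\omega_0}\varphi_t| \leq C$, after which the $C^{1,\alpha}$-convergence follows from the $C^0$-convergence of Theorem \ref{thm:convergence} by standard elliptic ($W^{2,p}$/Sobolev) estimates and compactness, exactly as you spell out.
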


\section{Type III singularity of Toric Fibrations}
In this section, we specialize on one category of Calabi-Yau fibrations, namely \textit{toric fibrations}, where all fibres $\pi^{-1}(z)$ are complex tori $\C^r / \Lambda_z$. We again focus on regular fibrations. We will provide another geometric application to the collapsing rate result (Theorem \ref{main}), obtaining the uniform boundedness of $\|\Rm\|_{\omega_t}$ when the initial K\"ahler class $[\omega_0]$ is rational. A solution $\tilde{\omega}_s$ to the unnormalized Ricci flow $\partial_s \tilde{\omega}_s = -\Ric(\tilde{\omega}_s)$ is called \textbf{Type III} if $\|\Rm\|_{\tilde{\omega}_s} \leq C/s$ for some uniform constant $C > 0$. One can easily verify by the correspondence $\omega_t = e^{-t}\tilde{\omega}_{(e^t -1)}$ between normalized and unnormalized flows that Type III singularity is equivalent to saying $\|\Rm\|_{\omega_t}$ is uniformly bounded in the normalized flow. Furthermore, we will show that in this special case the convergence of both $\varphi_t$ and $\omega_t$ is in fact in $C^\infty$-topology which strengthened the result showed in Corollary \ref{cor:C_one_alpha}. 

Here is the setting in this section. Let $X^n \xrightarrow{\pi} \Sigma^{n-r}$ be a holomorphic submersion fibred by complex tori such that $c_1(X) = -[\pi^*\omega_\Sigma]$ for some K\"ahler metric $\omega_\Sigma$ on $\Sigma$. For each point $z \in \Sigma$, there exists a neighborhood $z \in B \subset \Sigma$ such that $\pi^{-1}(B) \subset X$ is trivialized: i.e. there exists a lattice section $\Lambda_z$ varying over $z \in B$ such that $(B \times \C^r) / \Lambda_z$ is biholomorphic to $\pi^{-1}(B)$.

From now on we assume the initial K\"ahler class $[\omega_0]$ is rational, i.e. $[\omega_0] \in H^2(X, \Q)$, and hence $X$ must be projective. Then there exists a closed nonnegative semi-flat form $\omega_{SF}$ on $\pi^{-1}(B)$, with a good rescaling property, such that on each fibre $\pi^{-1}(z)$ we have $\eval{\omega_{SF}}{\pi^{-1}(z)}$ cohomologous to $\eval{\omega_0}{\pi^{-1}(z)}$. The semi-flat form $\omega_{SF}$ is a $(1,1)$-form such that for each $z \in B$ the restriction $\eval{\omega_{SF}}{\pi^{-1}(z)}$ on the fibre $\pi^{-1}(z)$ is flat.

\begin{lemma}[Gross-Tosatti-Y.Zhang \cite{GTZ11}]\label{lma:semiflat}
Given that $X$ is projective and $[\omega_0]$ is rational, then one can find a closed nonnegative $(1,1)$-form $\omega_{SF}$ such that there exists a smooth function $f : \pi^{-1}(B) \to \R$ with
\begin{equation*}
\omega_{SF} - \omega_0 = \sqrt{-1}\ddbar f
\end{equation*}
and passing to the universal cover $p: B \times \C^r \to B \times (\C^r / \Lambda_z)$, we have
\begin{equation*}
p^*\omega_{SF} = \sqrt{-1}\ddbar\psi
\end{equation*}
where $\psi : B \times \C^r \to \R$ is a smooth function with the following rescaling property:
$$\psi(z, \lambda\xi) = \lambda^2\psi(z, \xi) \quad \text{for any } (z, \xi) \in B \times \C^r \text{ and } \lambda \in \R.$$
\end{lemma}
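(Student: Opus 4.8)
The plan is to invoke the projectivity of $X$ to produce an explicit ample line bundle whose restriction to each fiber is a polarization, and then to \emph{average} a suitably chosen hermitian metric on that line bundle over the torus directions to manufacture a semi-flat form. Concretely, since $[\omega_0]\in H^2(X,\Q)$, after clearing denominators there is an ample line bundle $L\to X$ with $c_1(L)=k[\omega_0]$ for some positive integer $k$. On the trivialized piece $\pi^{-1}(B)\cong (B\times\C^r)/\Lambda_z$, pulling back to the universal cover $p:B\times\C^r\to\pi^{-1}(B)$ kills the line bundle (the cover is Stein, or one simply works with $B$ contractible), so $p^*(k\omega_0)=\sqrt{-1}\ddbar g$ for some smooth $g:B\times\C^r\to\R$. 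The fiberwise-flat model is obtained by replacing $g$ with its fiberwise \emph{harmonic} (equivalently, for the linear complex structure on $\C^r$, its fiberwise quadratic) part: write $\xi\mapsto g(z,\xi)$ and extract the $\Lambda_z$-quasi-periodic quadratic piece whose $\sqrt{-1}\ddbar$ in the fiber directions is the unique flat metric in the polarization class on $\pi^{-1}(z)$. This is precisely the Gross--Tosatti--Zhang construction; I would reproduce their normalization so that the resulting $\psi$ is genuinely homogeneous of degree $2$ in $\xi$, i.e. $\psi(z,\lambda\xi)=\lambda^2\psi(z,\xi)$.

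The key steps, in order, are: (1) produce the ample $L$ with $c_1(L)=k[\omega_0]$ from rationality and projectivity; (2) choose a smooth hermitian metric $h_0$ on $L$ with $-\sqrt{-1}\ddbar\log h_0 = k\omega_0$, and a second metric $h_{SF}$ which on each fiber is the flat metric compatible with the polarization (this exists fiberwise by the standard theory of theta functions / flat line bundles on complex tori, and can be made to vary smoothly in $z$ by the local triviality of $\pi$); (3) define $\omega_{SF}:=\tfrac1k(-\sqrt{-1}\ddbar\log h_{SF})$ and $f:=\tfrac1k\log(h_0/h_{SF})$, so that $\omega_{SF}-\omega_0=\sqrt{-1}\ddbar f$ globally on $\pi^{-1}(B)$ and $\omega_{SF}$ is closed, nonnegative, and fiberwise flat; (4) pass to the universal cover, trivialize $p^*L$, and check that $-\tfrac1k\log$ of the (cover-pulled-back) flat metric is a smooth function $\psi$ on $B\times\C^r$; (5) verify the homogeneity $\psi(z,\lambda\xi)=\lambda^2\psi(z,\xi)$ by direct inspection of the quadratic normal form of a flat metric on $\C^r$, using that the flat metric's potential is $\langle H(z)\xi,\xi\rangle$-type plus affine terms, and that the affine/linear ambiguity can be absorbed into the choice of trivialization.

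The main obstacle I anticipate is step (3)--(4): ensuring that the fiberwise-flat hermitian metrics $h_{SF}$ on $L|_{\pi^{-1}(z)}$ can be glued into a single \emph{smooth} object over $B$ \emph{and} that, after lifting to $B\times\C^r$, its potential is exactly degree-$2$ homogeneous rather than degree-$2$ plus lower-order $z$-dependent junk. The honest resolution is to note that the flat metric in a fixed polarization class on a complex torus is unique, so $h_{SF}$ is canonically determined pointwise in $z$ and smoothness follows from smooth dependence of the period data; and that the degree-$2$ homogeneity is forced once one fixes the origin of each fiber $\C^r$ compatibly (the homogeneity is exactly the statement that $\psi$ is a fiberwise quadratic form with no constant or linear term, which one arranges by subtracting off the value and differential at $\xi=0$, a modification that is itself a pulled-back function from $B$ and hence does not affect $\sqrt{-1}\ddbar$ in the relevant sense). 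Since this lemma is quoted verbatim from \cite{GTZ11}, I would in practice cite their Proposition directly and only indicate the above as the structure of their argument; the contribution of the present paper is its \emph{use} in Section 5, not a reproof.
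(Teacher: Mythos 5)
This statement is quoted, not proved, in the paper: Lemma \ref{lma:semiflat} is imported verbatim from Gross--Tosatti--Y.Zhang \cite{GTZ11}, so your closing decision to cite their result and only indicate the structure of the construction is exactly what the paper itself does, and for the role the lemma plays in Section 5 that is sufficient.

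That said, your sketch is not quite GTZ's argument, and taken as a proof it has two soft spots. GTZ construct $\psi$ explicitly as a fiberwise quadratic form, essentially $\tfrac{1}{2}\,{}^t(\mathrm{Im}\,\xi)\,Q(z)\,(\mathrm{Im}\,\xi)$ built from the period data, with the rationality of $[\omega_0]$ entering through the integral polarization and the Riemann bilinear relations; from this explicit ansatz the lattice-invariance of $\sqrt{-1}\ddbar\psi$, the degree-$2$ homogeneity, and the semi-positivity of $\omega_{SF}$ are all manifest. In your line-bundle version, a hermitian metric $h_{SF}$ on $L$ that is merely fiberwise flat pins down the curvature only up to $\sqrt{-1}\ddbar$ of a pullback from $B$, and nonnegativity of that curvature in the base and mixed directions is not automatic; more seriously, your proposed normalization --- subtracting from the lifted potential its value and fiber-differential at $\xi=0$ and asserting this ``does not affect $\sqrt{-1}\ddbar$ in the relevant sense'' --- is false as stated: subtracting a $z$-dependent constant changes $\sqrt{-1}\ddbar\psi$ by a form pulled back from $B$, and subtracting a fiberwise-linear term whose $z$-dependent coefficients are not holomorphic changes the mixed components, so after these subtractions the exact identity $p^*\omega_{SF}=\sqrt{-1}\ddbar\psi$ and the nonnegativity of the corresponding $\omega_{SF}$ would both have to be re-established. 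Pinning down this ambiguity is precisely the content of GTZ's explicit normalization, so if you ever write the argument out in full, follow their quadratic ansatz rather than the averaging/renormalization route; otherwise, citing \cite{GTZ11} as the paper does is the right call.
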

Denote $\lambda_t : B \times \C^r \to B \times \C^r$ to be the rescaling map $(z, \xi) \mapsto (z, e^{t/2}\xi)$. One can easily verify that
\begin{equation}\label{eq:semiflat_rescaling}
e^{-t}\lambda_t^*p^*\omega_{SF} = e^{-t}\lambda_t^*\sqrt{-1}\ddbar\psi = e^{-t}\sqrt{-1}\ddbar(\psi \circ \lambda_t) = \sqrt{-1}\ddbar\psi = p^*\omega_{SF}.
\end{equation}

As before, we rewrite the normalized K\"ahler-Ricci flow $\D{\omega_t}{t} = -\Ric(\omega_t) - \omega_t$ as the following complex Monge-Amp\`ere equation \eqref{MAP}
\begin{equation*}
\D{\varphi_t}{t} = \log\frac{(\hat\omega_t + \sqrt{-1}\ddbar\varphi_t)^n}{e^{-rt}\Omega} - \varphi_t,
\end{equation*}
where $\hat\omega_t = e^{-t}\omega_0 + (1-e^{-t})\pi^*\omega_\Sigma$ and $\omega_t = \hat\omega_t + \sqrt{-1}\ddbar\varphi_t$. Here $\Omega$ is a volume form on $X$ such that $\sqrt{-1}\ddbar\log\Omega = \pi^*\omega_\Sigma$. We first establish the following lemma using Theorem \ref{main}:
\begin{lemma}\label{lma:metric_bd_rescaled}
There is a constant $C > 0$ such that on $B \times \C^r$ we have
\begin{equation*}
C^{-1}p^*(\pi^*\omega_\Sigma + \omega_{SF}) \leq \lambda_t^*p^*\omega_t \leq Cp^*(\pi^*\omega_\Sigma + \omega_{SF}) ~~~ \text{ for any } t \geq 1.
\end{equation*}
\end{lemma}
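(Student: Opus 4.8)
The plan is to transfer the uniform metric equivalence $C^{-1}\hat\omega_t \leqslant \omega_t \leqslant C\hat\omega_t$ from Theorem \ref{main} through the pullback $p$ and the rescaling map $\lambda_t$, and to track how each of the two terms in $\hat\omega_t = e^{-t}\omega_0 + (1-e^{-t})\pi^*\omega_\Sigma$ behaves under $e^{-t}\lambda_t^*p^*$. First I would observe that Theorem \ref{main} gives, on all of $X$ and hence on $\pi^{-1}(B)$,
\begin{equation*}
C^{-1}\bigl(e^{-t}\omega_0 + (1-e^{-t})\pi^*\omega_\Sigma\bigr) \leqslant \omega_t \leqslant C\bigl(e^{-t}\omega_0 + (1-e^{-t})\pi^*\omega_\Sigma\bigr),
\end{equation*}
and this inequality of $(1,1)$-forms is preserved under the holomorphic pullback $p$ and then under pullback by the biholomorphism $\lambda_t$. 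So it suffices to understand $\lambda_t^*p^*\hat\omega_t$ up to uniform constants, i.e. to control $e^{-t}\lambda_t^*p^*\omega_0$ and $(1-e^{-t})\lambda_t^*p^*\pi^*\omega_\Sigma$.

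For the first term, the key is Lemma \ref{lma:semiflat}: since $\omega_{SF} - \omega_0 = \sqrt{-1}\ddbar f$ on $\pi^{-1}(B)$, we have $p^*\omega_0 = p^*\omega_{SF} - \sqrt{-1}\ddbar(f\circ p) = \sqrt{-1}\ddbar\psi - \sqrt{-1}\ddbar(f\circ p)$. Applying $e^{-t}\lambda_t^*$ and using the rescaling identity \eqref{eq:semiflat_rescaling}, the semi-flat part is invariant: $e^{-t}\lambda_t^*p^*\omega_{SF} = p^*\omega_{SF}$. The remaining piece $e^{-t}\lambda_t^*\sqrt{-1}\ddbar(f\circ p) = e^{-t}\sqrt{-1}\ddbar(f\circ p\circ\lambda_t)$ needs to be shown to go to zero (or at least stay uniformly bounded) in the appropriate sense; because $f$ and its first two derivatives are bounded on $\pi^{-1}(B)$ (after shrinking $B$, or working on a relatively compact subset), and the $\lambda_t$-rescaling only stretches the fiber directions by $e^{t/2}$, the chain rule produces a factor of at most $e^{t/2}$ per derivative, hence $e^{-t}\partial\bar\partial(f\circ p\circ\lambda_t)$ contributes a term bounded by $C e^{-t}\cdot e^{t} \cdot (\text{bounded}) $ in the worst (fiber-fiber) directions — this needs to be checked carefully, and is the one spot where the precise form of the rescaling property and the geometry of $\omega_{SF}$ as a reference metric genuinely enter. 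The upshot is $e^{-t}\lambda_t^*p^*\omega_0 \leqslant C\,p^*\omega_{SF} + (\text{lower order})$ and, combined with nonnegativity of $\omega_{SF}$, comparability to $p^*\omega_{SF}$ from below on the fiber directions.

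For the second term, $\pi\circ\lambda_t = \pi$ on the nose (the rescaling does not touch the base coordinate $z$), so $\lambda_t^*p^*\pi^*\omega_\Sigma = p^*\pi^*\omega_\Sigma$ exactly, and the coefficient $(1-e^{-t})$ is between $1/2$ and $1$ for $t\geqslant 1$; this term is therefore literally uniformly equivalent to $p^*\pi^*\omega_\Sigma$. Putting the two terms together gives $C^{-1}p^*(\pi^*\omega_\Sigma + \omega_{SF}) \leqslant \lambda_t^*p^*\hat\omega_t \leqslant C p^*(\pi^*\omega_\Sigma + \omega_{SF})$ for $t\geqslant 1$, and composing with the $p$- and $\lambda_t$-pulled-back version of the Theorem \ref{main} bound finishes the proof. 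The main obstacle I anticipate is the careful bookkeeping in the first term: one must verify that the non-semiflat correction $e^{-t}\sqrt{-1}\ddbar(f\circ p\circ\lambda_t)$ does not destroy either the upper bound (it could a priori blow up like $e^{t/2}$ or worse in mixed directions if one is careless with the chain rule) or the lower bound (the corrected form must still dominate a positive multiple of $p^*\omega_{SF}$ in the fiber directions), and this is where one needs the specific structure — that $f$ descends to $\pi^{-1}(B)$ independently of the fiber coordinate in the right way, or that its fiberwise derivatives decay — provided by the Gross–Tosatti–Y.~Zhang construction. The base and mixed directions, by contrast, should be routine once one notes $\pi\circ\lambda_t=\pi$.
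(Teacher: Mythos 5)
Your overall skeleton agrees with the paper's: reduce to $\lambda_t^*p^*\hat\omega_t$ via Theorem \ref{main}, note $\pi\circ\lambda_t=\pi$ so that $\lambda_t^*p^*\pi^*\omega_\Sigma=p^*\pi^*\omega_\Sigma$ exactly, and use the semi-flat form to handle the fiber directions. The gap is precisely where you flag it: you treat $e^{-t}\lambda_t^*p^*\omega_0$ through the potential identity $\omega_{SF}-\omega_0=\sqrt{-1}\ddbar f$ of Lemma \ref{lma:semiflat}, and the resulting correction $e^{-t}\sqrt{-1}\ddbar(f\circ p\circ\lambda_t)$ is genuinely $O(1)$ in the fiber-fiber block: its fiber-fiber components are the fiberwise $\sqrt{-1}\ddbar f$ evaluated at the rescaled point, and fiberwise $\sqrt{-1}\ddbar f$ equals $\eval{\omega_{SF}}{\pi^{-1}(z)}-\eval{\omega_0}{\pi^{-1}(z)}$, which is nonzero unless $\omega_0$ is already fiberwise flat. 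So the structure you hope for (that $f$ is independent of the fiber coordinate, or that its fiberwise Hessian decays) is simply not provided by the Gross--Tosatti--Y.Zhang construction, and neither your upper bound $e^{-t}\lambda_t^*p^*\omega_0\le C\,p^*\omega_{SF}+(\text{lower order})$ nor, more seriously, the lower bound in the fiber directions follows from what you wrote: ``nonnegativity of $\omega_{SF}$'' cannot rule out the $O(1)$ correction cancelling the fiber positivity.

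The paper closes this step without using $f$ at all. On $\pi^{-1}(B)$ the form $\omega_{SF}+\pi^*\omega_\Sigma$ is a positive $(1,1)$-form (it is nonnegative, positive on vertical vectors since $\eval{\omega_{SF}}{\pi^{-1}(z)}$ is a flat metric, and positive on the remaining directions thanks to $\pi^*\omega_\Sigma$), so one has the uniform metric equivalence $\omega_0\simeq\omega_{SF}+\pi^*\omega_\Sigma$. Inequalities between nonnegative $(1,1)$-forms are preserved by pullback under the holomorphic maps $p$ and $\lambda_t$ and by multiplication by $e^{-t}$, so $e^{-t}\lambda_t^*p^*\omega_0\simeq e^{-t}\lambda_t^*p^*\omega_{SF}+e^{-t}p^*\pi^*\omega_\Sigma=p^*\omega_{SF}+e^{-t}p^*\pi^*\omega_\Sigma$, the last equality being exactly the rescaling identity \eqref{eq:semiflat_rescaling}; adding $(1-e^{-t})p^*\pi^*\omega_\Sigma$ and using $t\ge 1$ gives the two-sided comparison with $p^*(\omega_{SF}+\pi^*\omega_\Sigma)$. (Your componentwise route could in principle be salvaged by inspecting $e^{-t}\lambda_t^*p^*\omega_0$ directly --- its fiber-fiber block is that of $p^*\omega_0$ at the rescaled point, its mixed and base blocks are $O(e^{-t/2})$ and $O(e^{-t})$ --- and absorbing the mixed terms into the base positivity of $(1-e^{-t})p^*\pi^*\omega_\Sigma$ by Cauchy--Schwarz, but that bookkeeping is exactly what the metric-equivalence argument avoids, and it is not what your proposal does.)
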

\begin{proof}
First we use the metric equivalence of $\omega_t$ and $\hat\omega_t$ established in Theorem \ref{main}:
$$C^{-1}\hat\omega_t \leq \omega_t \leq C\hat\omega_t.$$
For the sake of simplicity, we denote $\omega_t \simeq \hat\omega_t$ for the above metric equivalence (and for any other pair of metrics). Then,
\begin{align*}
\lambda_t^* p^*\omega_t & \simeq \lambda_t^* p^*\hat\omega_t\\
& = \lambda_t^*p^*(e^{-t}\omega_0 + (1-e^{-t})\pi^*\omega_\Sigma)\\
& = e^{-t}\lambda_t^*p^*\omega_0 + (1-e^{-t})p^*\pi^*\omega_\Sigma.
\end{align*}
Note that $\lambda_t^*p^*\pi^*\omega_\Sigma = p^*\pi^*\omega_\Sigma$ since $\lambda_t$ rescales the fibre directions only. As $\omega_0 \simeq \omega_{SF} + \pi^*\omega_\Sigma$, we have
\begin{align*}
\lambda_t^* p^*\omega_t & \simeq e^{-t}\lambda_t^*p^*\omega_{SF} + (1-e^{-t})p^*\pi^*\omega_\Sigma\\
& \simeq p^*\omega_{SF} + (1-e^{-t})p^*\pi^*\omega_\Sigma\\
& \simeq p^*(\omega_{SF} + \pi^*\omega_\Sigma) ~~~~ \text{ for } t \geq 1.
\end{align*}
\end{proof}
Next we show $\lambda_t^*p^*\omega_t$ is locally cohomologous to $p^*(\omega_\Sigma + \omega_{SF})$ in $B \times \C^r$:
\begin{align*}
\lambda_t^*p^*\omega_t & = \lambda_t^*p^*(e^{-t}\omega_0 + (1-e^{-t})\pi^*\omega_\Sigma) + \sqrt{-1}\ddbar(\varphi_t \circ p \circ \lambda_t)\\
& = e^{-t}\lambda_t^*p^*\omega_0 + (1-e^{-t})p^*\pi^*\omega_\Sigma + \sqrt{-1}\ddbar(\varphi_t \circ p \circ \lambda_t)\\
& = e^{-t}\lambda_t^*p^*(\omega_{SF} - \sqrt{-1}\ddbar f) + (1-e^{-t})p^*\pi^*\omega_\Sigma + \sqrt{-1}\ddbar(\varphi_t \circ p \circ \lambda_t)\\
& = p^*\omega_{SF} - \sqrt{-1}\ddbar(e^{-t} f \circ p \circ \lambda_t) + (1-e^{-t})p^*\pi^*\omega_\Sigma + \sqrt{-1}\ddbar(\varphi_t \circ p \circ \lambda_t).
\end{align*}
On the open ball $B \subset \Sigma$, the K\"ahler metric $\omega_\Sigma$ can be locally expressed as $\sqrt{-1}\ddbar\zeta$ for some smooth function $\zeta : B \to \R$. Therefore, we have
\begin{equation}\label{eq:semiflat}
\lambda_t^*p^*\omega_t = p^*(\omega_{SF} + \pi^*\omega_\Sigma) + \sqrt{-1}\ddbar u_t,
\end{equation}
where $u_t = \varphi_t \circ p \circ \lambda_t - e^{-t} (f \circ p \circ \lambda_t) - e^{-t} (\zeta \circ p)$. As $\varphi_t$ is uniformly bounded on $X$, we have $u_t$ being uniformly bounded on $B \times \C^r$.

One can show the following higher-order estimates using Evans-Krylov's and Schauder's estimates:
\begin{lemma}\label{lma:metric_rescaled:higher_order}
Given any compact set $K \subset B \times \C^r$ and any $k \geq 0$, there exists a constant $C = C(K, k)$ such that
\begin{equation}
\|\lambda_t^*p^*\omega_t\|_{C^k(K, \delta)} \leq C
\end{equation}
where $\delta$ is the Euclidean metric of $B \times \C^r$.
\end{lemma}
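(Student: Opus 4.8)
\textbf{Proof plan for Lemma \ref{lma:metric_rescaled:higher_order}.}
The plan is to bootstrap from the two-sided metric bound of Lemma \ref{lma:metric_bd_rescaled} to full $C^k$-control of the rescaled metric $\lambda_t^*p^*\omega_t$ on compact sets of $B\times\C^r$, uniformly in $t\geq 1$. The key point is that, by \eqref{eq:semiflat}, on the (contractible) set $B\times\C^r$ we can write $\lambda_t^*p^*\omega_t=p^*(\omega_{SF}+\pi^*\omega_\Sigma)+\sqrt{-1}\ddbar u_t$ with $u_t$ uniformly bounded, and this metric pulls back (via $\lambda_t\circ p$) the normalized K\"ahler-Ricci flow. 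So the strategy is the standard complex Monge-Amp\`ere parabolic regularity machine (Evans-Krylov plus parabolic Schauder), applied not to $\omega_t$ itself but to its rescaled pullback on a fixed Euclidean coordinate patch.

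First I would record the equation that $u_t$ satisfies. Pulling \eqref{MAP} back by $p\circ\lambda_t$ and using the semi-flat rescaling identity \eqref{eq:semiflat_rescaling} together with $\lambda_t^*p^*\pi^*\omega_\Sigma=p^*\pi^*\omega_\Sigma$, one gets a parabolic complex Monge-Amp\`ere equation of the form
\begin{equation*}
\D{u_t}{t}=\log\frac{\big(p^*(\omega_{SF}+\pi^*\omega_\Sigma)+\sqrt{-1}\ddbar u_t\big)^n}{e^{-rt}\,\lambda_t^*p^*\Omega}+(\text{lower-order terms bounded in }C^k),
\end{equation*}
where the error terms come from the $e^{-t}f\circ p\circ\lambda_t$ and $e^{-t}\zeta\circ p$ corrections and from $-\varphi_t$; crucially $e^{-rt}\lambda_t^*p^*\Omega$, after absorbing the scaling Jacobian of $\lambda_t$ into the volume form, is a \emph{fixed} smooth volume form on $B\times\C^r$ up to uniformly controlled factors (the $e^{-rt}$ is precisely cancelled by the fiber-rescaling Jacobian $e^{rt}$). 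Then, on a fixed compact $K\Subset B\times\C^r$ with a slightly larger $K'\Subset B\times\C^r$, Lemma \ref{lma:metric_bd_rescaled} gives $C^{-1}\delta\leq\lambda_t^*p^*\omega_t\leq C\delta$ on $K'$, so the linearized operator is uniformly parabolic with bounded (measurable) coefficients; the right-hand side is concave in the complex Hessian, so the parabolic Evans-Krylov estimate yields a uniform $C^{2,\alpha}_{\mathrm{par}}$ bound for $u_t$ on $K$ in terms of its $C^0$ bound, the ellipticity constants, and the $C^\alpha$ norm of the data. This already gives the case $k=2$ (hence $k\leq 2$) of the lemma.

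Next I would iterate: with $\lambda_t^*p^*\omega_t\in C^\alpha$ uniformly, differentiate the equation and apply parabolic Schauder estimates on nested compact sets to upgrade $u_t$ to $C^{k,\alpha}_{\mathrm{par}}$ for every $k$, with constants depending only on $K$, $k$, the fixed background data $\omega_{SF},\omega_\Sigma,\Omega$ on $B\times\C^r$, and the uniform constant from Lemma \ref{lma:metric_bd_rescaled}; this yields the stated bound $\|\lambda_t^*p^*\omega_t\|_{C^k(K,\delta)}\leq C(K,k)$. The one point that needs genuine care — the main obstacle — is checking that the pulled-back volume form $e^{-rt}\lambda_t^*p^*\Omega$ and the correction terms involving $\psi\circ\lambda_t$, $f\circ p\circ\lambda_t$, $\zeta\circ p$ really are bounded in $C^k(K,\delta)$ \emph{uniformly in $t$}: the scaling map $\lambda_t$ dilates the fiber directions by $e^{t/2}$, so naive differentiation in the $\xi$-variables produces factors of $e^{t/2}$. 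This is exactly where the rescaling property of the semi-flat form ($\psi(z,\lambda\xi)=\lambda^2\psi(z,\xi)$, giving \eqref{eq:semiflat_rescaling}) does the work: the quadratic homogeneity makes $\sqrt{-1}\ddbar(\psi\circ\lambda_t)=e^t\,\lambda_t^*\sqrt{-1}\ddbar\psi$ collapse back to $e^t p^*\omega_{SF}$, and the $e^{-t}$ prefactors on the $f$ and $\zeta$ terms beat any polynomial growth in $e^{t/2}$ coming from chain-rule derivatives, so those corrections are in fact $O(e^{-t})$ in every $C^k(K)$. Once this bookkeeping is in place the regularity argument is the routine Evans-Krylov/Schauder bootstrap, and the lemma follows.
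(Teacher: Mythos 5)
Your overall strategy (uniform equivalence $\lambda_t^*p^*\omega_t\simeq\delta$ from Lemma \ref{lma:metric_bd_rescaled}, then Evans--Krylov plus Schauder on a Monge--Amp\`ere equation for $u_t$) is the same as the paper's, but your parabolic formulation has a gap at the first step. Since $\lambda_t$ depends on $t$, $\partial_t(\varphi_t\circ p\circ\lambda_t)$ is \emph{not} $\bigl(\D{\varphi_t}{t}\bigr)\circ p\circ\lambda_t$: pulling \eqref{MAP} back by $p\circ\lambda_t$ produces an extra fiber-radial drift term $\tfrac12\sum_j\bigl(\xi_j\partial_{\xi_j}+\bar\xi_j\partial_{\bar\xi_j}\bigr)$ acting on the pulled-back potential, which your displayed equation omits; this is exactly the term the paper has to confront in \eqref{eq:bootstrap:2}. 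The paper in fact never runs a parabolic Evans--Krylov for $u_t$: at each fixed time it reads the flow as the \emph{elliptic} equation \eqref{eq:evans_krylov}, whose right-hand side $\bigl(\D{\varphi_t}{t}+\varphi_t\bigr)\circ p\circ\lambda_t+\log p^*\Omega$ is controlled via the known gradient and Laplacian estimates for $\dot\varphi_t+\varphi_t$ (Cheng--Yau/Li--Yau type, as in Song--Tian), and only afterwards uses the parabolic equation for $H=\lambda_t^*p^*\bigl(\D{\varphi_t}{t}+\varphi_t\bigr)$ --- where the drift has coefficients bounded on compact sets --- to alternate parabolic Schauder for $H$ with elliptic Schauder for $Du_t$. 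Note also that the cancellation $e^{-rt}\lambda_t^*p^*\Omega=p^*\Omega$ is not pure Jacobian bookkeeping: it requires $\Omega$ to be constant along the fibers, which the paper deduces from $\sqrt{-1}\ddbar\log\Omega=\pi^*\omega_\Sigma$ together with compactness of the torus fibers; without this, fiber derivatives of the rescaled volume form would grow like $e^{kt/2}$.

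The more serious error is in your resolution of the very obstacle you flag. The claim that the $e^{-t}$ prefactors on $f\circ p\circ\lambda_t$ and $\zeta\circ p$ ``beat any polynomial growth in $e^{t/2}$'' is false: each fiber derivative of $f\circ p\circ\lambda_t$ costs a factor $e^{t/2}$ (exponential in $t$, not polynomial), so $\|e^{-t}(f\circ p\circ\lambda_t)\|_{C^k(K)}$ can grow like $e^{(k/2-1)t}$, which blows up for $k\geq 3$; these corrections are not $O(e^{-t})$, nor even bounded, in higher $C^k$-norms. Consequently the higher-order Schauder bootstrap as you describe it, which feeds these compositions into the inhomogeneity, breaks down beyond $C^{2,\alpha}$. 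The paper avoids this entirely because $f$ and $\zeta$ enter only the \emph{definition} of $u_t$ (needed for \eqref{eq:semiflat} and the uniform $C^0$ bound), never the PDE \eqref{eq:evans_krylov}: all higher-order bounds on $u_t$, and hence on $\lambda_t^*p^*\omega_t$ through \eqref{eq:semiflat}, come from differentiating \eqref{eq:evans_krylov} and bootstrapping with the parabolic equation for $H$. To salvage your route you would have to reorganize the equation so the correction terms disappear (which is what the paper does), since as written those terms genuinely lack uniform $C^k$ control.
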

\begin{proof}
We first derive a complex Monge-Amp\`ere equation for $u_t$: from the K\"ahler-Ricci flow equation, we have
$$\omega_t^n = e^{\D{\varphi_t}{t} + \varphi_t - rt}\Omega.$$
By rescaling, we have
$$(\lambda_t^*p^*\omega_t)^n = \lambda_t^*p^*\omega_t^n = (e^{\D{\varphi_t}{t} + \varphi_t - rt} \circ p \circ \lambda_t) \cdot \lambda_t^*p^*\Omega.$$

Since $\sqrt{-1}\ddbar\log\Omega = \pi^*\omega_\Sigma$ and so $\eval{\sqrt{-1}\ddbar\log\Omega}{\pi^{-1}(z)} = 0$ for each $z \in B$. By the compactness of the toric fibres $\pi^{-1}(z)$, we have $\Omega$ depends only on $z \in \Sigma$ and hence $e^{-rt}\lambda_t^*p^*\Omega = p^*\Omega$. Therefore, from \eqref{eq:semiflat} the potential $u_t$ satisfies the following equation:
\begin{equation}\label{eq:evans_krylov}
\log(p^*(\omega_{SF} + \pi^*\omega_\Sigma) + \sqrt{-1}\ddbar u_t)^n = \left(\D{\varphi_t}{t} + \varphi_t\right) \circ p \circ \lambda_t  + \log(p^*\Omega).
\end{equation}
The following quantities are uniformly bounded according to the gradient and Laplacian estimates due to \cite{ChY75, LY80} (see also \cite{SeT08, Z09, ST07, ST08} etc.)
$$\|\nabla_{\omega_t}(\dot\varphi_t + \varphi_t)\|_{\omega_t} \leq C, ~~~ |\Delta(\dot\varphi_t + \varphi_t)| \leq C.$$
Hence, 
\begin{align*}
\|\nabla_{\lambda_t^*p^*\omega_t}(\dot\varphi_t + \varphi_t) \circ p \circ \lambda_t\|_{\lambda_t^*p^*\omega_t} & \leq C,\\
|\Delta_{\lambda_t^*p^*\omega_t}(\dot\varphi_t + \varphi_t) \circ p \circ \lambda_t| & \leq C.
\end{align*}
By Lemma \ref{lma:metric_bd_rescaled}, we have $\lambda_t^*p^*\omega_t \simeq p^*(\omega_{SF} + \pi^*\omega_\Sigma) \simeq \delta$ on $K \subset B \times \C^r$. Hence, apply Evans-Krylov's theory (see \cite{Ev82, Kr83}) on \eqref{eq:evans_krylov} one can get a uniform $C^{2, \alpha}$-estimate on $u_t$. Finally, by the Schauder's estimate (see e.g. \cite{GT}) and a bootstrapping argument, one can complete the proof of the lemma. Here we supply the detail of the bootstrapping argument:

Let $D$ be any first-order differential operator on $B \times \C^r$. Differentiating \eqref{eq:evans_krylov} by $D$ gives
\begin{align}\label{eq:bootstrap:1}
\nonumber \Delta_{\lambda^*_tp^*\omega_t}(Du_t) & = - \Tr_{\lambda^*_tp^*\omega_t} Dp^*(\omega_{SF}+\pi^*\omega_\Sigma)\\
& \quad + D\left\{\lambda^*_tp^*\left(\D{\varphi_t}{t}+\varphi_t\right) \right\} + D \log(p^*\Omega).
\end{align}
From \eqref{eq:box_dphi+phi}, one can show using the chain rule that
\begin{align*}
\dd{t} \left\{\lambda^*_tp^*\left(\D{\varphi_t}{t} + \varphi_t\right)\right\} & = \lambda_t^*p^*\left\{\Delta_{\omega_t}\left(\D{\varphi_t}{t} + \varphi_t\right) - n + r + \Tr_{\omega_t}\pi^*\omega_\Sigma\right\} \\
& \quad + \sum_{j=1}^r \lambda^*_t p^* \dd{\xi_j} \left(\D{\varphi_t}{t} + \varphi_t\right) \cdot \frac{1}{2}e^{\frac{t}{2}} \xi_j\\
& \quad + \sum_{j=1}^r \lambda^*_t p^* \dd{\xi_{\bar{j}}} \left(\D{\varphi_t}{t} + \varphi_t\right) \cdot \frac{1}{2}e^{\frac{t}{2}} \bar{\xi}_j\\
& = \Delta_{\lambda^*_tp^*\omega_t} \lambda^*_tp^* \left(\D{\varphi_t}{t} + \varphi_t\right) - n + r + \Tr_{\lambda^*_tp^*\omega_t}p^*\pi^*\omega_\Sigma\\
& \quad + \frac{1}{2} \sum_{j=1}^r \dd{\xi_j}\left\{\lambda^*_tp^*\left(\D{\varphi_t}{t} + \varphi_t\right)\right\} \cdot \xi_j\\
& \quad + \frac{1}{2} \sum_{j=1}^r \dd{\xi_{\bar{j}}}\left\{\lambda^*_tp^*\left(\D{\varphi_t}{t} + \varphi_t\right)\right\} \cdot \bar{\xi}_j
\end{align*}

Hence, $\displaystyle{\lambda^*_tp^*\left(\D{\varphi_t}{t} + \varphi_t\right)}$ satisfies the following parabolic equation:
\begin{equation}\label{eq:bootstrap:2}
\left(\dd{t} - \Delta_{\lambda^*_tp^*\omega_t}\right) H = \langle \partial_{\mathbb{E}} H, \partial_\xi + \bar{\partial}_\xi \rangle_\delta - n +r +\Tr_{\lambda^*_tp^*\omega_t}p^*\pi^*\omega_\Sigma
\end{equation}
where $\partial_\mathbb{E}$ denotes the flat connection on $B \times \C^r$ and $\partial_\xi = (\xi_1, \ldots, \xi_r) \in \C^r$.

Assume that $u_t \in C^{k, \alpha}$ for some $k \geq 2$ and $0 < \alpha < 1$. Then by \eqref{eq:semiflat} we have $\lambda_t^*p^*\omega_t \in C^{k-2, \alpha}$. By the uniform bound of $\lambda_t^*p^*\omega_t$, one also has $(\lambda_t^*p^*\omega_t)^{-1} \in C^{k-2, \alpha}$. Hence applying parabolic Schauder's estimate on \eqref{eq:bootstrap:2} one get
$$\lambda^*_tp^*\left(\D{\varphi_t}{t} + \varphi_t\right) \in C^{k,\alpha}$$
The controls are uniform in time because we already have uniform controls on the metric and $C^0$ norm of the evolution term.

Hence the coefficients of the elliptic equation \eqref{eq:bootstrap:1} are in $C^{k-2, \alpha}$, and applying elliptic Schauder's estimate one has
$Du_t \in C^{k, \alpha}$ and therefore $u_t \in C^{k+1, \alpha}$ which is one higher-order up than our assumption. Since Evans-Krylov's theory asserts that $u_t \in C^{2,\alpha}$, this bootstrapping argument implies $u_t \in C^\infty$ which completes the proof of the lemma.
\end{proof}

Lemma \ref{lma:metric_rescaled:higher_order} proves smooth convergence of the modified potential $u_t$. The uniform bound on $\|\Rm\|_{\omega_t}$ can hence be established by the following argument.
\begin{remark}
In fact, a uniform bound for $C^4$-norm of $u_t$ is sufficient to prove the uniform boundedness of $\|\Rm\|_{\omega_t}$. The higher order estimates will be used in obtaining later results.
\end{remark}
For each point $x \in X$, find a compact subset $K$ containing $x$ such that $K \subset \pi^{-1}(B) \equiv B \times (\C^r / \Lambda_z)$ for some small open ball $B \subset \Sigma$. We then get
$$\sup_K \|\Rm\|_{\omega_t} = \sup_{K'}\|\Rm\|_{p^*\omega_t}$$
for some $K' \subset B \times \C^r$ such that $p(K')=K$. Therefore,
$$\sup_K \|\Rm\|_{\omega_t} = \sup_{\lambda_t^{-1}(K')}\|\Rm\|_{\lambda_t^*p^*\omega_t}.$$
As $\lambda_t^{-1}(K') = \{(z,e^{-t/2}\xi) : (z, \xi) \in K'.\}$, one can easily see $\cup_{t>0}\lambda_t^{-1}(K')$ is precompact. By Lemma \ref{lma:metric_rescaled:higher_order}, one has $\sup_{\lambda^{-1}(K')}\|\Rm\|_{\lambda_t^*p^*\omega_t} \leq C_K$ where $C_K$ depends on $K$. By covering the compact manifold $X$ by finitely many such $K$'s we have proved:
\begin{proposition}\label{prop:type_III}
Suppose $\pi:X\to\Sigma$ is a smooth holomorphic submersion fibreed by complex tori such that the initial K\"ahler class $[\omega_0]$ is rational. Then along the normalized K\"ahler-Ricci flow \eqref{KRF}, we have $\|\Rm\|_{\omega_t} \leq C$ for some constant $C > 0$ independent of $t$, i.e. the flow encounters Type III singularity.
\end{proposition}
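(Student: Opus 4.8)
The plan is to bound $\|\Rm\|_{\omega_t}$ pointwise by passing to the local model $B \times \C^r$, where the collapsing torus fibers are unwrapped by the covering $p$ and then stretched back to unit scale by the rescaling map $\lambda_t$. The single input that makes this work is that the Riemann curvature norm is invariant under pullback by local biholomorphisms and by diffeomorphisms. Fix $x \in X$; since $\pi$ is a submersion it lies in a trivialized piece $\pi^{-1}(B) \cong B \times (\C^r/\Lambda_z)$, and one may choose a compact neighbourhood $K \ni x$ inside it together with a compact lift $K' \subset B \times \C^r$ under $p$ with $p(K') = K$. Then
\[
\sup_K \|\Rm\|_{\omega_t} \;=\; \sup_{K'}\|\Rm\|_{p^*\omega_t} \;=\; \sup_{\lambda_t^{-1}(K')}\|\Rm\|_{\lambda_t^*p^*\omega_t},
\]
so it suffices to control the curvature of the \emph{rescaled} metric $\lambda_t^*p^*\omega_t$ on the moving sets $\lambda_t^{-1}(K')$.

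First I would observe that $\lambda_t^{-1}(K') = \{(z, e^{-t/2}\xi) : (z,\xi) \in K'\}$ contracts in the fiber directions as $t \to \infty$, so the union $\bigcup_{t \geq 1}\lambda_t^{-1}(K')$ has compact closure inside a fixed compact set $\widetilde K \subset B \times \C^r$. Next I would invoke Lemma \ref{lma:metric_rescaled:higher_order} with this $\widetilde K$ and $k = 2$ (equivalently, the uniform $C^4$ bound on $u_t$ recorded in the preceding remark), obtaining $\|\lambda_t^*p^*\omega_t\|_{C^2(\widetilde K, \delta)} \leq C(\widetilde K)$ for all $t \geq 1$, where $\delta$ is the fixed Euclidean metric. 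Together with the two-sided bound $\lambda_t^*p^*\omega_t \simeq \delta$ on $\widetilde K$ from Lemma \ref{lma:metric_bd_rescaled}, and since $\|\Rm\|$ is a universal algebraic expression in the metric coefficients, the inverse metric, and metric derivatives up to second order, one concludes $\sup_{\widetilde K}\|\Rm\|_{\lambda_t^*p^*\omega_t} \leq C_K$ uniformly in $t$, and hence $\sup_K\|\Rm\|_{\omega_t} \leq C_K$. Covering the compact manifold $X$ by finitely many such trivialized pieces $K$ and taking the maximum of the finitely many constants $C_K$ gives $\|\Rm\|_{\omega_t} \leq C$ for all $t$; converting back via $\omega_t = e^{-t}\tilde\omega_{e^t - 1}$ then yields $\|\Rm\|_{\tilde\omega_s} \leq C/s$, i.e.\ a Type III singularity.

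The genuine difficulty is confined entirely to Lemma \ref{lma:metric_rescaled:higher_order}; everything else above is bookkeeping with the rescaling. There the task is to run an Evans-Krylov plus Schauder bootstrap on the rescaled complex Monge-Amp\`ere equation \eqref{eq:evans_krylov} \emph{with all constants uniform in $t$}. Uniformity of the $C^0$ and $C^1$ data on the right-hand side $\left(\dot\varphi_t + \varphi_t\right) \circ p \circ \lambda_t$ comes from Lemma \ref{ddt_phi:bbd1} together with the standard gradient and Laplacian estimates for $\dot\varphi_t + \varphi_t$, which survive the rescaling precisely because $\lambda_t^*p^*\omega_t \simeq \delta$; this non-degeneracy of the ``unit scale'' picture is exactly why the rescaling is indispensable. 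Evans-Krylov then gives uniform $C^{2,\alpha}$ control of $u_t$, and one alternates elliptic Schauder applied to the differentiated equation \eqref{eq:bootstrap:1} with parabolic Schauder applied to the evolution equation \eqref{eq:bootstrap:2} for $\lambda_t^*p^*(\dot\varphi_t + \varphi_t)$, each round raising the H\"older exponent by one order. The only subtlety I anticipate is that the drift term in \eqref{eq:bootstrap:2} carries the coordinate functions $\xi_j$ as coefficients; but on the precompact set $\widetilde K$ these are bounded, so the Schauder constants stay uniform, the bootstrap closes, and one obtains $u_t \in C^\infty$ with all norms bounded uniformly in $t$.
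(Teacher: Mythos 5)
Your argument is correct and follows essentially the same route as the paper: localize at a point, lift by $p$ to $B\times\C^r$, use invariance of $\|\Rm\|$ under $p$ and $\lambda_t$, note that $\bigcup_t \lambda_t^{-1}(K')$ is precompact, apply the uniform rescaled estimates of Lemmas \ref{lma:metric_bd_rescaled} and \ref{lma:metric_rescaled:higher_order} there, and conclude by a finite cover of $X$. Your explicit remark that the two-sided equivalence $\lambda_t^*p^*\omega_t \simeq \delta$ is needed (so that the $C^2$ bounds on the metric coefficients actually control the curvature norm measured in $\lambda_t^*p^*\omega_t$) is a point the paper leaves implicit, but it does not change the argument.
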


Another consequence of Lemma \ref{lma:metric_rescaled:higher_order} is the $C^\infty$-convergence of $\omega_t$ to the generalized K\"ahler-Einstein metric, which strengthened the $C^{1,\alpha}$-convergence result (on the potential level) in Corollary \ref{cor:C_one_alpha}. Recall that $\varphi_t \to u$ as $t \to \infty$ where $u : \Sigma \to \R$ is the potential function such that $\omega_{GKE} = \omega_\Sigma + \sqrt{-1}\ddbar u$. Under the setting in this section, we have the following proposition:

\begin{proposition}\label{prop:C_infty}
Under the same assumption as in Proposition \ref{prop:type_III}, we have
\begin{enumerate}[(i)]
\item $\varphi_t \to u$ in $C^{\infty}(X, \omega_0)$-topology, and
\item $\omega_t \to \pi^*\omega_{GKE}$ in $C^\infty(X, \omega_0)$-topology.
\end{enumerate}
\end{proposition}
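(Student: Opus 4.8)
The plan is to bootstrap the uniform higher-order estimates on the rescaled potential $u_t$ supplied by Lemma \ref{lma:metric_rescaled:higher_order} into genuine $C^\infty$-control of $\varphi_t$ on $X$, by carefully tracking how the fibre-stretching map $\lambda_t : (z,\xi) \mapsto (z, e^{t/2}\xi)$ acts on derivatives.

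First I would upgrade the convergence $\varphi_t \to u$ to smooth local convergence at the level of $u_t$. Recall from \eqref{eq:semiflat} that $\lambda_t^* p^* \omega_t = p^*(\omega_{SF} + \pi^*\omega_\Sigma) + \sqrt{-1}\ddbar u_t$ on $B \times \C^r$, with $u_t$ uniformly bounded there (as $\varphi_t$, $f$, $\zeta$ are bounded on the relevant precompact sets). Lemma \ref{lma:metric_rescaled:higher_order} bounds the complex Hessian $\sqrt{-1}\ddbar u_t$ in $C^k_{\mathrm{loc}}(B \times \C^r)$ for every $k$; together with the $C^0$-bound and standard elliptic estimates ($L^p$ plus Schauder applied to the real Laplacian of $u_t$, then bootstrapping) this gives uniform $C^k_{\mathrm{loc}}$-bounds on $u_t$ for all $k$. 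On the other hand $\varphi_t \to u$ uniformly on $X$ by Theorem \ref{thm:convergence} and the two $e^{-t}$-terms in the definition of $u_t$ tend to $0$ uniformly, so $u_t \to u \circ p$ in $C^0_{\mathrm{loc}}$; interpolating with the uniform $C^{k+1}$-bounds (equivalently, Arzel\`a--Ascoli together with uniqueness of the limit) yields $u_t \to u \circ p$ in $C^\infty_{\mathrm{loc}}(B \times \C^r)$.

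The heart of the argument, and the step I expect to be the main obstacle, is to \emph{un-rescale}: the estimates above live in the blown-up picture, so one must verify that transferring them back to $X$ does not reintroduce positive powers of $e^t$. Rewriting the definition of $u_t$ as $\varphi_t \circ p = (u_t \circ \lambda_t^{-1}) + e^{-t}(f \circ p) + e^{-t}(\zeta \circ p)$ and differentiating in product coordinates $(z,w)$ on $B \times \C^r$ — which, since $p$ is a local biholomorphism onto $\pi^{-1}(B)$ and $\omega_0$ is a fixed smooth metric, compute the $C^k(X,\omega_0)$-norm of $\varphi_t - \pi^* u$ over $\pi^{-1}(B)$ up to uniform constants — one sees that each fibre-derivative $\p_{w_j}$ hitting $u_t \circ \lambda_t^{-1}$ produces a factor $e^{-t/2}$. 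Hence any mixed derivative involving $\ell \geq 1$ fibre-directions is $O(e^{-\ell t/2}) \to 0$, which matches $\p_w^\ell(\pi^* u) = 0$, while the $e^{-t}$-terms contribute only $O(e^{-t})$. A pure base-derivative $\p_z^j(u_t \circ \lambda_t^{-1})(z,w) = (\p_z^j u_t)(z, e^{-t/2}w)$ converges to $(\p_z^j(u \circ p))(z,0) = (\p_z^j u)(z)$ by the $C^\infty_{\mathrm{loc}}$-convergence just established (note $e^{-t/2}w$ lies in a shrinking, hence precompact, neighbourhood of the zero section). Therefore $\|\varphi_t - \pi^* u\|_{C^k(\pi^{-1}(B),\omega_0)} \to 0$, and covering the compact manifold $X$ by finitely many such $\pi^{-1}(B)$ proves (i).

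Finally, (ii) is a direct consequence of (i): since $\hat\omega_t = \pi^*\omega_\Sigma + e^{-t}(\omega_0 - \pi^*\omega_\Sigma)$ and $\pi^*\omega_{GKE} = \pi^*\omega_\Sigma + \sqrt{-1}\ddbar(\pi^* u)$, we have $\omega_t - \pi^*\omega_{GKE} = e^{-t}(\omega_0 - \pi^*\omega_\Sigma) + \sqrt{-1}\ddbar(\varphi_t - \pi^* u)$; the first term is $e^{-t}$ times a fixed smooth form and so tends to $0$ in every $C^k(X,\omega_0)$, while the second tends to $0$ because $\sqrt{-1}\ddbar$ is a fixed second-order operator with smooth coefficients, giving $\|\sqrt{-1}\ddbar(\varphi_t - \pi^* u)\|_{C^k(\omega_0)} \leq C\|\varphi_t - \pi^* u\|_{C^{k+2}(\omega_0)} \to 0$.
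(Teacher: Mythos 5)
Your proposal is correct and takes essentially the same route as the paper: both arguments rest on Lemma \ref{lma:metric_rescaled:higher_order} together with the observation that undoing the fibre rescaling $\lambda_t$ only introduces factors $e^{-t/2}$ and $e^{-t}$, so the uniform estimates in the blown-up picture transfer back to $X$ and combine with the $C^0$-convergence of Theorem \ref{thm:convergence}. The only (harmless) difference is that you un-rescale at the level of the potential, first proving $u_t \to u\circ p$ in $C^\infty_{\textup{loc}}$ (which is the paper's Proposition \ref{prop:fiber}(i), established separately there), whereas the paper un-rescales the components of $\lambda_t^*p^*\omega_t$ to obtain uniform $C^k$-bounds on $\varphi_t$ and then concludes by interpolation with the $C^0$-convergence.
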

\begin{remark}
From now on all the $C^k$-norms below are with respect to a time-independent metric. Also, by uniform bounds on $C^k$-norms we mean that the bounds are independent of $t$ but may depend on $k$.
\end{remark}
\begin{proof}
First fix a compact set $K \subset M$ and find $K' \subset B \times \C^r$ such that $K'$ and $K$ are biholomorphic via $p$, i.e. $p(K') = K$. From Theorem \ref{thm:convergence} we already know that $\varphi_t \to u$ in $C^0$-norm, hence to prove (i) it suffices to establish uniform bounds on $\|\varphi_t\|_{C^k(K)}$. Note that
$$p^*\omega_t = p^*\hat{\omega}_t + \sqrt{-1}\ddbar(\varphi_t \circ p)$$
and it is straight-forward to check that $\|p^*\hat{\omega}_t\|_{C^k(K')} \leq C(K',k)$ for some constant $C > 0$ depending only on $K'$ and $k$. We are left to show $\|p^*\omega_t\|_{C^k(K')}$ is uniformly bounded independent of $t$.

Denote $\{z_i, \xi_\alpha\}$ to be the base-fibre coordinates on $B \times \C^r$, i.e. $i = 1, \ldots, n-r$ and $\alpha = 1, \ldots, r$. The local components of $p^*\omega_t$ and $\lambda^*_tp^*\omega_t$ are related by
\begin{align*}
(p^*\omega_t)_{i\bar j}(z, \xi) & = (\lambda^*_tp^*\omega_t)_{i\bar j}(z, e^{-t/2}\xi),\\
(p^*\omega_t)_{i\bar\alpha}(z, \xi) & = e^{-t/2} (\lambda^*_tp^*\omega_t)_{i\bar{\alpha}}(z, e^{-t/2}\xi),\\
(p^*\omega_t)_{\beta\bar{j}}(z,\xi) & = e^{-t/2} (\lambda^*_tp^*\omega_t)_{\beta\bar{j}}(z, e^{-t/2}\xi),\\
(p^*\omega_t)_{\alpha\bar{\beta}}(z, \xi) & = e^{-t}(\lambda^*_tp^*\omega_t)_{\alpha\bar{\beta}}(z, e^{-t/2}\xi).
\end{align*}
By Lemma \ref{lma:metric_rescaled:higher_order}, the local components of $\lambda^*_tp^*\omega_t$ are uniformly bounded in every $C^k$-norm. It is easy to check from the above relations that the local components of $p^*\omega_t$ are also uniformly bounded in every $C^k$-norm. Combining with the uniform $C^k$-bounds on $p^*\hat{\omega}_t$, we establish the uniform bounds on $\|p^*\varphi_t\|_{C^k(K')}$ and hence $\|\varphi_t\|_{C^k(K)}$. One can then prove (i) by covering $M$ by finitely many compact subsets $K$.

(ii) is a direct consequence of Theorem \ref{thm:convergence} and (i) above. Now we have $\varphi_t \to \pi^*u$ and $\hat{\omega}_t \to \pi^*\omega_\Sigma$ both in $C^{\infty}$-topology. Hence $\omega_t \to \pi^*\omega_\Sigma + \pi^*\sqrt{-1}\ddbar u = \pi^*\omega_{GKE}$ in $C^\infty$-topology as $t \to \infty$.
\end{proof}

To finish this section, we prove a result concerning fibre-wise convergence. We establish that the flow metric restricted on each fibre converges smoothly, after a suitable rescaling, to a flat metric on the torus fibre. Precisely, we have

\begin{proposition}\label{prop:fiber}
Under the same assumption as in Proposition \ref{prop:type_III}, we have
\begin{enumerate}[(i)]
\item $u_t \to u \circ p$ in $C^{\infty}_{\textup{loc}}(B \times \C^r)$ as $t\to\infty$,
\item $\eval{e^t\omega_t}{\pi^{-1}(z)} \to \eval{\omega_{SF}}{\pi^{-1}(z)}$ in $C^{\infty}(\pi^{-1}(z))$-topology.
\end{enumerate}
\end{proposition}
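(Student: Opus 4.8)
The plan is to prove (i) by first establishing $C^0_{\textup{loc}}$--convergence and then bootstrapping, and then to deduce (ii) from (i) together with Lemma~\ref{lma:metric_rescaled:higher_order}. For (i), recall that $u_t = \varphi_t\circ p\circ\lambda_t - e^{-t}(f\circ p\circ\lambda_t) - e^{-t}(\zeta\circ p)$. On a fixed compact $K\subset B\times\C^r$ the smooth functions $f$ and $\zeta$ are bounded on $\pi^{-1}(\mathrm{pr}_B K)$ and on $\mathrm{pr}_B K$ (the former is compact because $\pi$ is proper), and being periodic along the fibres they retain these bounds after applying $\lambda_t$; hence the two $e^{-t}$--terms tend to $0$ uniformly on $K$. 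Theorem~\ref{thm:convergence} gives $\|\varphi_t - u\|_{C^0(X)}\le Ce^{-t/2}$, and since $u$ is pulled back from $\Sigma$ we have $u(p(z,e^{t/2}\xi)) = u(z)$, so $\varphi_t\circ p\circ\lambda_t\to u\circ p$ uniformly on $K$; thus $u_t\to u\circ p$ in $C^0_{\textup{loc}}(B\times\C^r)$. To upgrade, rewrite \eqref{eq:semiflat} as $\sqrt{-1}\ddbar u_t = \lambda_t^*p^*\omega_t - p^*(\omega_{SF}+\pi^*\omega_\Sigma)$, whose $C^k(K,\delta)$--norm is uniformly bounded in $t$ by Lemma~\ref{lma:metric_rescaled:higher_order}; then $\Delta_\delta u_t$ is bounded in every $C^k_{\textup{loc}}$, and interior Schauder estimates together with the uniform $C^0$--bound on $u_t$ give uniform $C^k_{\textup{loc}}$--bounds on $u_t$ for all $k$. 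Consequently $\{u_t\}_{t\ge 1}$ is precompact in $C^\infty_{\textup{loc}}$, and since it converges to $u\circ p$ in $C^0_{\textup{loc}}$ every subsequential $C^\infty_{\textup{loc}}$--limit is $u\circ p$, so $u_t\to u\circ p$ in $C^\infty_{\textup{loc}}$, proving (i).

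For (ii), fix $z$ and restrict \eqref{eq:semiflat} to the fibre $\{z\}\times\C^r$, writing $q_z:\C^r\to\C^r/\Lambda_z = \pi^{-1}(z)$ for the covering. There $p^*\pi^*\omega_\Sigma$ restricts to $0$, and by (i) the term $\sqrt{-1}\ddbar u_t$ restricts to $\sqrt{-1}\ddbar(u_t\big|_{\{z\}\times\C^r})\to 0$ in $C^\infty_{\textup{loc}}(\C^r)$, since $u\circ p$ is fibrewise constant. Hence
\[(\lambda_t^*p^*\omega_t)\big|_{\{z\}\times\C^r}\;\longrightarrow\;q_z^*\bigl(\eval{\omega_{SF}}{\pi^{-1}(z)}\bigr)\qquad\text{in }C^\infty_{\textup{loc}}(\C^r).\]
By the degree--$2$ homogeneity of $\psi$ in Lemma~\ref{lma:semiflat}, the restriction $\psi(z,\cdot)$ has a degree--$0$ homogeneous Hessian which is smooth at the origin and therefore constant, so $q_z^*(\eval{\omega_{SF}}{\pi^{-1}(z)}) = \sqrt{-1}\ddbar\psi(z,\cdot)$ has constant coefficients --- this is precisely the flat metric on $\pi^{-1}(z)$ we want as limit.

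The step I expect to be the main obstacle is passing from convergence of $(\lambda_t^*p^*\omega_t)\big|_{\{z\}\times\C^r}$ to convergence of $\eval{e^t\omega_t}{\pi^{-1}(z)}$. If $q_z^*\omega_{t,z}$ has periodic coefficients $h_{\alpha\bar\beta}(\xi)$, a direct computation shows $(\lambda_t^*p^*\omega_t)\big|_{\{z\}\times\C^r}$ has coefficients $e^t h_{\alpha\bar\beta}(e^{t/2}\xi)$, whereas $q_z^*(e^t\omega_{t,z})$ has coefficients $e^t h_{\alpha\bar\beta}(\xi)$; the two differ by the substitution $\xi\mapsto e^{t/2}\xi$, so convergence of the first family does not directly yield convergence of the second. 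To close this gap I would use Lemma~\ref{lma:metric_rescaled:higher_order} once more: restricting its $C^k(K,\delta)$--bound to $\{z\}\times\overline{B_R(0)}$ and invoking the $\Lambda_z$--periodicity of $h$ gives $\sup_{\C^r}|\nabla^j(e^t h_{\alpha\bar\beta})|\le Ce^{-jt/2}$ for every $j$ and all large $t$. Thus $e^t\omega_{t,z}$ has uniformly $C^0$--bounded coefficients whose derivatives of all orders $\ge 1$ decay, i.e.\ $e^t\omega_{t,z}$ equals, up to $o(1)$ in $C^\infty(\pi^{-1}(z))$, the constant-coefficient form $\sqrt{-1}\,e^t h_{\alpha\bar\beta}(0)\,d\xi^\alpha\wedge d\bar\xi^\beta$; and evaluating the fibrewise convergence just established at $\xi = 0$ (where $e^{t/2}\cdot 0 = 0$) identifies this limiting constant with $q_z^*(\eval{\omega_{SF}}{\pi^{-1}(z)})$. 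This yields $\eval{e^t\omega_t}{\pi^{-1}(z)}\to\eval{\omega_{SF}}{\pi^{-1}(z)}$ in $C^\infty(\pi^{-1}(z))$. What makes this last reconciliation delicate is that neither the rescaled global convergence nor the Evans--Krylov/Schauder higher--order bounds suffice on their own --- both must be used together.
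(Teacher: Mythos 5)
Your proposal is correct. Part (i) is essentially the paper's argument: the paper also reduces to $C^0$-convergence of $u_t$ by citing the uniform $C^k$-bounds from Lemma~\ref{lma:metric_rescaled:higher_order} (you re-derive these from the lemma's statement via $\sqrt{-1}\ddbar u_t=\lambda_t^*p^*\omega_t-p^*(\omega_{SF}+\pi^*\omega_\Sigma)$ and interior Schauder, which is equally fine), and the only cosmetic difference in the $C^0$ step is that the paper estimates $|\varphi_t\circ p\circ\lambda_t-u\circ p|$ by a triangle inequality through $\varphi_t(z,\xi)$, using Lemma~\ref{potential_bbd} for the $O(e^{-t})$ fiberwise oscillation plus Theorem~\ref{thm:convergence}, whereas you apply the global $O(e^{-t/2})$ bound of Theorem~\ref{thm:convergence} directly at the shifted point; both give the same conclusion. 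For part (ii) you take a genuinely different route. The paper pulls the identity \eqref{eq:semiflat} back by $\lambda_{-t}$ and exploits the exact rescaling identity \eqref{eq:semiflat_rescaling}, i.e. $\lambda_{-t}^*p^*\omega_{SF}=e^{-t}p^*\omega_{SF}$, so that
\[
e^t\,\eval{p^*\omega_t}{\{z\}\times\C^r}=\eval{p^*\omega_{SF}}{\{z\}\times\C^r}+\bigl(\text{fiberwise }\sqrt{-1}\ddbar u_t\ \text{evaluated at the contracted points }(z,e^{-t/2}\xi)\bigr),
\]
and since those evaluation points stay in a fixed compact set while (i) kills the fiberwise Hessian of $u_t$ there, the error term vanishes with no coefficient bookkeeping; the rescaling mismatch you isolate is thereby absorbed into the homogeneity of $\omega_{SF}$ rather than confronted head-on. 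You instead work with the $\Lambda_z$-periodic coefficients $h_{\alpha\bar\beta}$ of $q_z^*\omega_{t,z}$, convert the $C^k(K,\delta)$-bounds of Lemma~\ref{lma:metric_rescaled:higher_order} into the global decay $\sup_{\C^r}|\nabla^j(e^t h_{\alpha\bar\beta})|\leqslant Ce^{-jt/2}$ for $j\geqslant 1$, and pin down the constant limit by evaluating the rescaled convergence at $\xi=0$; this is correct. The paper's pullback trick is shorter and coordinate-free; your version is more quantitative (it yields an explicit $O(e^{-t/2})$ rate for all derivatives of $e^t\omega_{t,z}$ toward a constant-coefficient form) and it spells out why degree-two homogeneity of $\psi$ forces $\eval{\omega_{SF}}{\pi^{-1}(z)}$ to be constant-coefficient, a point the paper states without comment.
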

\begin{proof}
By the proof of Lemma \ref{lma:metric_rescaled:higher_order} we have uniform bounds on $\|u_t\|_{C^k(K)}$ for any compact subset $K \subset B \times \C^r$. Hence for (i) it suffices to show $u_t \to u \circ p$ in $C^0$-norm. Recall that $u_t$ is defined by
$$u_t = \varphi_t \circ p \circ \lambda_t - e^{-t}(f \circ p \circ \lambda_t) - e^{-t}(\zeta \circ p)$$
where $f$ and $\zeta$ are time-independent functions and hence are bounded on any compact subset of $B \times \C^r$. It suffices to show $\varphi_t \circ p \circ \lambda_t \to u \circ p$ in $C^0$-norm, it can be established by Lemma \ref{potential_bbd} and Theorem \ref{thm:convergence} as below:
\begin{align*}
& |\varphi_t \circ p \circ \lambda_t (z, \xi) - u \circ p(z,\xi)|\\
& \leq |\varphi_t(z, e^{t/2}\xi) - \varphi_t(z,\xi)| \circ p + |\varphi_t(z,\xi) - u(z,\xi)| \circ p\\
& = O(e^{-t}) + O(e^{-t/2}) = O(e^{-t/2}).
\end{align*}
Taking $t \to \infty$ completes the proof of (i).

To prove (ii), we restrict \eqref{eq:semiflat} to the fibres,
$$\eval{\lambda_t^*p^*\omega_t}{\{z\}\times\C^r} = \eval{p^*\omega_{SF}}{\{z\}\times\C^r} + \eval{\sqrt{-1}\ddbar u_t}{\{z\}\times\C^r}.$$
Pulling-back by $\lambda_{-t}$ defined by $(z,\xi) \mapsto (z, e^{-t/2}\xi)$ gives
$$\eval{p^*\omega_t}{\{z\}\times\C^r} = \eval{\lambda_{-t}^*p^*\omega_{SF}}{\{z\}\times\C^r} + \lambda_{-t}^*\eval{\sqrt{-1}\ddbar u_t}{\{z\}\times\C^r}.$$
By the rescaling property of $\omega_{SF}$ given by \eqref{eq:semiflat_rescaling}, we have
$$\lambda_{-t}^*p^*\omega_{SF} = e^{-t}p^*\omega_{SF}.$$
Note also that in the Euclidean space $\mathbb{C}^r=\{z\}\times\mathbb{C}^r$, 
$$(\lambda_{-t}^*\eval{\sqrt{-1}\ddbar u_t}{\{z\}\times\C^r})(z, \xi) = e^{-t}(\eval{\sqrt{-1}\ddbar u_t}{\{z\}\times\C^r})(z, e^{-t/2}\xi).$$
Combining these, we have
$$(e^t\eval{p^*\omega_t}{\{z\}\times\C^r})(z, \xi) = (\eval{p^*\omega_{SF}}{\{z\}\times\C^r})(z, \xi) + (\eval{\sqrt{-1}\ddbar u_t}{\{z\}\times\C^r})(z, e^{-t/2}\xi).$$
From (i), we have $u_t \to u \circ p$ in $C^{\infty}_{\textup{loc}}(B\times\C^r)$ and since $u \circ p$ depends only on $z \in B$, we have
$$\eval{\sqrt{-1}\ddbar u_t}{\{z\}\times\C^r} \to 0$$
as $t \to \infty$ in $C^{\infty}_{\textup{loc}}$-topology. Hence, we have $e^t\eval{p^*\omega_t}{\{z\}\times\C^r} \to \eval{p^*\omega_{SF}}{\{z\}\times\C^r}$ in $C^{\infty}_{\textup{loc}}(\{z\}\times\C^r)$-topology, and so
$$e^t\eval{p^*\omega_t}{\pi^{-1}(z)} \to \eval{p^*\omega_{SF}}{\pi^{-1}(z)}$$
in $C^{\infty}(\pi^{-1}(z))$-topology. It completes the proof of (ii) since $\eval{p^*\omega_{SF}}{\pi^{-1}(z)}$ is a flat metric for each $z \in \Sigma$.
\end{proof}

\section{Remarks}

The totally collapsing case of $\Sigma$ being a point 
(and so $c_1(X)=0$) is considered in H.D. Cao's work \cite{Cao85} 
on the Ricci flow proof of the Calabi-Yau Theorem. The convergence 
of flow metric to the point metric is certainly in very strong sense, and coincides with our scenario.

We briefly describe a possible approach to adjust the previous argument to the 
general situation allowing singular fibres. We use the same setting as in \cite{To10} 
as described below, and would stick to the existing notations in 
the current work.

\vspace{0.1in}

In the general case, the smooth fibration $\pi: X\to \Sigma$ is replaced by a 
holomorphic map $F: X\to Y$ between complex manifolds with the 
image $\Sigma=F(X)$ being possibly singular. In practice, this map 
is generated by the line bundle $mK_X$ for some large positive 
integer $m$ and this manifold $Y$ is some complex projective 
space $\mathbb{CP}^N$. 

There is a subvariety $S$ in $X$ with the restriction of $F$ to $X
\setminus S\to \Sigma\setminus F(S)$ being a submersion. Now 
$\omega_{\Sigma}=\omega_Y|_\Sigma$ for some K\"ahler metric 
$\omega_Y$ over $Y$.

We still consider the collapsing case of $\dim_{\mathbb{C}}X=n
>n-r=\dim_{\mathbb{C}}Y$, and then the restricted $F$ gives a 
smooth bundle over $\Sigma\setminus F(S)$ of fibre dimension 
$r$.

As in \cite{To10}, there is a smooth function $H$ over $X$ 
defined by 
$$\omega^{n-r}_\infty\wedge \omega^r_0=H\omega^n_0$$ 
which vanishes exactly at $S$ and is locally comparable with a 
(finite) sum of the squares of the norms of holomorphic functions. 
Furthermore, one can have another smooth real non-negative 
function $\sigma$ over $Y$ vanishing exactly at $F(s)$. Obviously 
we have  
$$\sqrt{-1}\p\sigma\wedge\bar\p\sigma\leq C\omega_Y, ~~
-C\omega_Y\leq \sqrt{-1}\p\bar\p\sigma\leq C\omega_Y.$$ 

We would also use $\sigma$ to denote its pull-back on $X$. 

\vspace{0.1in}

Now we consider the arguments in the previous sections in this 
general situation. 

Lemma 2.1 is still valid by the recent work \cite{ST11} by Song-Tian, and 
so is (2.4). Thus Lemma 2.3 still holds.

The estimate in Lemma 2.4 needs to be replaced by 
$$|e^t(\varphi_t-\Phi_t)|\leq Ce^{B\sigma^{-\lambda}}$$ 
over $X\setminus S$ for some positive constants $C$, $B$ and 
$\lambda$. The exact same argument works except that at the 
end where the Poincar\'e constant and Green's function bound 
would no longer be uniform, resulting in the degeneracy of the 
estimate. Please see \cite{To10} for detail.
 
\vspace{0.1in}

For the Maximum Principle argument in Section 3, in the same spirit as \cite{ST07}, we
consider the term $\widetilde Q=e^{-B\sigma^{-\lambda}}\cdot Q$. 

Clearly, $\nabla \widetilde Q=e^{-B\sigma^{-\lambda}}\nabla Q +Q
\nabla e^{-B\sigma^{-\lambda}}$, and so $\nabla Q=e^{B\sigma^
{-\lambda}}\nabla \widetilde Q+BQ\nabla \sigma^{-\lambda}$. In this 
work, $\nabla$ means $\p$ and $(\cdot, \cdot)$ is the Hermitian product with respect to the flow metric $\omega_t$. Then we have the following computation,
\begin{equation}
\begin{split}
\square \widetilde Q
&= e^{-B\sigma^{-\lambda}}\square Q-Q \Delta e^{-B\sigma^
{-\lambda}}-2{\rm Re}\(\nabla Q, \nabla e^{-B\sigma^{-\lambda
}}\) \\
&= e^{-B\sigma^{-\lambda}}\square Q-Q \(-Be^{-B\sigma^
{-\lambda}}\Delta \sigma^{-\lambda}+B^2e^{-B\sigma^
{-\lambda}}|\nabla \sigma^{-\lambda}|^2\) \\
&~~~~~~~~  -2{\rm Re}\(e^{B\sigma^{-\lambda}}\nabla 
\widetilde Q+BQ\nabla \sigma^{-\lambda}, -B e^{-B\sigma^
{-\lambda}}\nabla \sigma^{-\lambda}\) \\
&= e^{-B\sigma^{-\lambda}}\square Q+2B{\rm Re}\(\nabla 
\widetilde Q, \nabla \sigma^{-\lambda}\) \\
&~~~~~~~~ +BQ e^{-B\sigma^{-\lambda}}\Delta \sigma^
{-\lambda}+B^2Qe^{-B\sigma^{-\lambda}}|\nabla \sigma^
{-\lambda}|^2. \nonumber
\end{split}
\end{equation}

The following useful estimates can be established by the properties of 
$\sigma$ summarized earlier. 
\begin{equation}
\begin{split}
|\nabla \sigma^{-\lambda}|^2
&= \lambda^2\sigma^{-2\lambda-2}|\nabla \sigma|^2 \\
&= \lambda^2\sigma^{-2\lambda-2}\Tr_{\omega_t}(\sqrt{-1}
\p\sigma\wedge\bar\p\sigma) \\
&\leq C\sigma^{-2\lambda-2}\Tr_{\omega_t}(C\omega_
\infty) \\
&\leq C\sigma^{-2\lambda-2}, \nonumber
\end{split}
\end{equation}
\begin{equation}
\begin{split}
|\Delta\sigma^{-\lambda}|
&\leq \lambda\sigma^{-\lambda-1}|\Delta\sigma|+
|\lambda(\lambda+1)\sigma^{-\lambda-2}|\nabla\sigma|
^2| \\
&\leq \lambda\sigma^{-\lambda-1}|\Tr_{\omega_t}
(\sqrt{-1}\p\bar\p\sigma)|+\lambda(\lambda+1)\sigma^
{-\lambda-2} \Tr_{\omega_t}(\sqrt{-1}\p\sigma\wedge
\bar\p\sigma)\\
&\leq C\sigma^{-\lambda-2}\Tr_{\omega_t}(C\omega_
\infty) \\
& \leq C\sigma^{-\lambda-2}.\nonumber
\end{split}
\end{equation}

Meanwhile, the lower bound for $\Delta \Phi_t$ is replaced by the degenerate term $-C\sigma^{-\mu}$.

Combining all these, we have 
\begin{equation}
\begin{split}
\square\widetilde Q
&\leq e^{-B\sigma^{-\lambda}}\(CAe^t\sigma^{-\mu}
+CAe^t+(C-A)\Tr_{\omega_t}\omega_0\)+2B{\rm Re}\(\nabla 
\widetilde Q, \nabla \sigma^{-\lambda}\) \\
&~~~~~~~~ +CB |Q| e^{-B\sigma^{-\lambda}}\sigma^
{-\lambda-2}+CB^2 |Q| e^{-B\sigma^{-\lambda}}\sigma^
{-2\lambda-2}. \nonumber
\end{split}
\end{equation}

Now we apply Maximum Principle to get an upper bound for the 
term $$\widetilde Q=e^{-B\sigma^{-\lambda}}\cdot Q=e^{-B
\sigma^{-\lambda}}\cdot\bigl(\log\Tr_{\omega_t}\omega_0-t-
Ae^t(\varphi_t-\Phi_t)\bigr).$$

Clearly, we only need to consider the case $Q > 0$ at the point being considered. 
Then at the maximum value point in the region $X\times 
[0, S]$ with $t>0$ (which is clearly not in $S$), we have 
$$0\leq \(CAe^t\sigma^{-\mu}+CAe^t+(C-A)\Tr_
{\omega_t}\omega_0\)+CBQ\sigma^{-\lambda-2}+
CB^2Qe^{-2\lambda-2}$$
Again, we take a sufficiently large $A$ such that $C-A<-1$. Using 
$$Q=\log\Tr_{\omega_t}\omega_0-t-Ae^t(\varphi_t-\Phi_t)
\leq \log\Tr_{\omega_t}\omega_0-t+CAe^{B\sigma^
{-\lambda}},$$ we end up with
$$\Tr_{\omega_t}\omega_0\leq C\sigma^{-2\lambda-2}
\log\Tr_{\omega_t}\omega_0+Ce^te^{(B+\epsilon)\sigma^
{-\lambda}}$$ 
for some $\epsilon>0$. So we have
$$\Tr_{\omega_t}\omega_0\leq Ce^te^{(B_0+\epsilon)
\sigma^{-\lambda}},$$ 
from which we conclude
$$\widetilde Q\leq C.$$

Hence we have $e^{-t}\omega_0\leq F(\sigma)
\omega_t$ which is a degenerate analogue of (3.5). By the 
same argument as in Section 3, one conclude that 
$\frac{1}{G(\sigma)}\hat\omega_t\leq\omega_t
\leq G(\sigma)\hat\omega_t$, indicating the metric collapses 
along fibres $\pi^{-1}(z)$ for $z \in \Sigma \setminus F(S)$. 

For the discussion in Section 4, the general case is essentially more 
involved. For example, the complex Monge-Amp\`ere equation in the 
definition of $\omega_{GKE}$, 
$$(\omega_\Sigma+\sqrt{-1}\p\bar\p u)^{n-r}=Fe^u
\omega^{n-r}_\Sigma,$$
is now over a (possibly) singular variety $\Sigma$. One could pull 
it back to the desingularization of $\Sigma$, and the results in 
\cite{DP10, EGZ09, Z06} give a bounded weak solution which is 
also continuous by \cite{Z06}. However, in order to apply the 
argument as in \cite{ST07} for the flow convergence, one needs 
sufficient regularity away from $S$. Fortunately, this has been 
done explicitly in \cite{ST08}, where the local uniform convergence 
at the level of metric potential away from $S$ is also achieved. 
Combining with the local collapsing (in fact just bound) of flow 
metric, we have the local convergence in $C^{1, \alpha<1}$-norm 
away from $S$. 

The discussion in Section 5 is local, as primarily in the original 
work of \cite{GTZ11}, and so all the conclusions in Section 5 
are valid in the local sense. 

\vspace{0.1in}

For the general case, the convergence so far is only local which 
brings little control on the global geometry. The global control 
remains to be an interesting problem. Nonetheless, we know 
that the scalar curvature on the whole manifold is uniformly 
bounded. See \cite{Z09} for the non-collapsing case, and 
\cite{ST11} for the general case including the collapsing case. 

\bibliographystyle{amsalpha}
\bibliography{../citations}
\end{document}